\documentclass[a4paper,reqno,11pt,oneside]{amsart}
\usepackage{amsmath}
\usepackage[left=2.61cm,right=2.61cm,top=2.72cm,bottom=2.72cm]{geometry}
\usepackage[colorlinks=true,urlcolor=blue,
citecolor=red,linkcolor=blue,linktocpage,pdfpagelabels,
bookmarksnumbered,bookmarksopen]{hyperref}
\usepackage[english]{babel}
\usepackage{graphicx}
\usepackage[toc,page,header]{appendix}
\usepackage{mathrsfs}
\usepackage{amssymb,amsmath, amsfonts, amsthm}
\usepackage{latexsym}
\usepackage{enumitem}
\allowdisplaybreaks[4]

\newcommand{\Ric}{\operatorname{Ric}}

\newcommand{\R}{\mathbb{R}}
\numberwithin{equation}{section}

\newtheorem{theorem}{Theorem}[section]

\newtheorem{corollary}[theorem]{Corollary}
\newtheorem{lemma}[theorem]{Lemma}
\newtheorem{remark}[theorem]{Remark}
\newtheorem{definition}[theorem]{Definition}

\hypersetup{
  colorlinks=true,
  linkcolor=magenta,
  citecolor=blue,
  filecolor=blue,
  urlcolor=blue
}

\begin{document}

\title[\small Gradient estimates for $p$-Laplacian equation]{Gradient estimates for $p$-Laplacian equation with  cubic polynomial nonlinearity on  Riemannian manifolds}

\author{Zhen Qiu}
\address[Zhen Qiu]{School of Mathematics and Information Science, Guangzhou University, Guangzhou 510006, China.}
\email{qiuzhen97@foxmail.com}

\author{Youde Wang}
\address[Youde Wang]{1. School of Mathematics and Information Sciences, Guangzhou University
2. State Key Laboratory of Mathematical Sciences (SKLMS), Academy of Mathematics and Systems Science, Chinese Academy of Sciences, Beijing 100190, China
3. School of Mathematical Sciences, University of Chinese Academy of Sciences, Beijing 100049, China.}
\email{wyd@math.ac.cn}

\author{Jun Yang}
\address[Jun Yang]{School of Mathematics and Information Science, Guangzhou University, Guangzhou 510006, China.}
\email{jyang2019@gzhu.edu.cn}


\begin{abstract}

This paper studies a class of $p$-Laplace equations with  cubic polynomial nonlinearity
\[
\Delta_p v + (v-a_1)(v-a_2)(v-a_3) = 0
\]
on complete Riemannian manifolds $M$ with lower Ricci curvature bounds,
where $a_1 < a_2 < a_3$ are real constants and $\Delta_p v = \operatorname{div}(|\nabla v|^{p-2}\nabla v)$ denotes the $p$-Laplace operator.
Depending on whether the solution lies in the intervals $(a_1,a_2), (a_2,a_3)$ or $(a_1,a_3)$,
we employ, respectively, a logarithmic transformation or a hyperbolic tangent transformation to convert the original equation to another one
for further analysis.
Through a detailed analysis of the lower-bound estimate for the linearized operator of the new equation,
and by combining Saloff-Coste's Sobolev inequality with a Moser iteration, we establish Cheng-Yau type gradient estimates under an additional assumption on $p$.
As applications, the Liouville theorem and a Harnack inequality are further proved.
\\
{\textbf{Mathematics Subject Classification:}} 35J92; 58J05; 35B45
\\
{\textbf{Key Words:}} $p$-Laplacian equation;  Cubic nonlinearity; Gradient estimate; Moser iteration
\end{abstract}


\thanks{
Zhen Qiu  is supported by the innovation research for Postgraduate of Guangzhou University(No. JCCX2024-013) and NSFC(No. 12201140). Youde Wang is supported by NSFC(No. 12431003). Jun Yang is supported by NSFC(No. 12171109). Corresponding author: Jun Yang, jyang2019@gzhu.edu.cn}
\maketitle
\tableofcontents

\section{Introduction}
In this paper, we are concerned with a class of $p$-Laplacian equations with cubic polynomial nonlinearity:
\begin{equation}\label{GAC}
\Delta_p v + (v-a_1)(v-a_2)(v-a_3) = 0, \quad {\rm in} \,\: M,\tag{PCN}
\end{equation}
where $(M, g)$ is a complete Riemannian manifold equipped with a Riemannian metric $g$, and $a_1 < a_2 < a_3 $ are real numbers. Here, the $p$-Laplacian on $(M, g)$ is defined by
$$
\Delta _p v= \mathrm{div} (|\nabla  v|^{p-2} \nabla v),\quad \forall\,  v \in W^{1,p},
$$
which is understood in the distributional sense. In particular, when $p=2$, it is exactly the standard Beltrami-Laplacian.

\subsection{ Background and Motivations}\
The gradient estimate  that combines curvature conditions with the maximum principle has dominated research on gradient estimates for geometric partial differential equations on manifolds over the past half-century. Yau \cite{MR431040} and  Cheng and Yau \cite{MR385749} established the gradient estimate, i.e., the so-called ``Cheng-Yau type gradient estimate", for the positive solution of the  harmonic function equation
\begin{align}\label{harmonic equation}
    \Delta v =0
\end{align}
on complete Riemannian manifolds with Ricci curvature bounded from below.
Suppose $(M,g)$ is an $n$-dimensional complete noncompact Riemannian manifold with Ricci curvature $\Ric_g(M)\geq -(n-1)\kappa g$, the Cheng-Yau type gradient estimate of \eqref{harmonic equation} is precisely formulated as
\begin{align*}
\sup\limits_{B_{R/2}(o)} \frac{|\nabla v|^2}{v^2}
\leq C(n) \left(\frac{1+ \sqrt{\kappa} R }{R}\right)^2, \quad \forall\, B_{R/2}(o)\subset M,
\end{align*}
which provides profound geometric and regularity characterizations of solutions. For the above harmonic function equation, the gradient estimate exhibits an extremely concise form. The elegance of this estimate lies in the fact that its expression depends only on three fundamental quantities: the dimension $n$ of the manifold, the curvature parameter $ \kappa$, and the radius $R$ of the domain. It is precisely this simplicity that makes it a highly general and widely applicable result.

It is worth emphasizing that such a gradient estimate carries powerful analytical implications.
From it, one can directly derive the classical Harnack inequality (an inequality that describes the preservation of nonnegativity and the control of upper and lower bounds of solutions in local regions, reflecting the regularity of solutions and the strong maximum principle). Furthermore, the gradient estimate provides a natural approach to  Liouville-type theorems: under whole-space or boundedness conditions, the growth constraint on the gradient implies that the solution must be constant. Thus, the gradient estimate for the harmonic equation is not only concise in form and minimal in its dependence on parameters, but also analytically powerful, serving as a key bridge connecting geometry, analysis, and partial differential equations.

Over the past two decades, many authors have used similar techniques to prove gradient estimates, Liouville theorem and Harnack inequalities for $p$-Laplacian equation:
\begin{align*}
\Delta_p  v=0.
\end{align*}
Kotschwar and Ni \cite{MR2518892} proved the gradient estimates of the $p$-Laplacian equation under the assumption that the sectional curvature of the complete Riemannian manifold is bounded from below by a constant. Later on, Wang and Zhang \cite{MR2880214} adopted Nash-Moser iteration to establish gradient estimates for positive $p$-Laplacian functions under weaker curvature condition, i.e., the Ricci curvature is bounded from below.
Wang and Zhang's work is vital, reflected not only  in the replacement of sectional curvature by Ricci curvature but also in the introduction of the powerful tool, i.e., the Nash-Moser iteration.

On the other hand, many mathematicians paid attentions to the following semilinear elliptic equation named after Lane-Emden on a Riemannian manifold $M$ or an Euclidean space $\mathbb{R}^n$
\begin{align}\label{LE}
\Delta v + v^q =0.
\end{align}
For instances, Gidas and Spruck \cite{MR615628} proved the Liouville theorem in the case $1\leq q <\frac{n+2}{n-2}$ for $n\geq 2$. Recently, Wang and Wei \cite{MR4559367} adopted the Nash-Moser iteration scheme to prove a local Cheng-Yau type gradient estimate, which is of the same form as in \cite{MR385749}, for positive solutions to Lane-Emden equation if
$$q \in \left(-\infty,\quad \frac{n+1}{n-1} + \frac{2}{\sqrt{n(n-1)}}\right).$$
This gradient estimate also reveals the non-existence of the positive weak solution to the above Lane-Emden equation provided the Ricci curvature of $M$ is nonnegative and hence improves the Liouville theorem due to Gidas and Spruck. Later on, for the case $q\in (-\infty,\, \frac{n+2}{n-2})$ in \eqref{LE} Lu \cite{Lu} established the local Cheng-Yau type logarithm gradient estimate for positive solutions to Lane-Emden equation \eqref{LE} on a complete Riemannian manifold with Ricci curvature bounded from below.

One also considered superlinear elliptic problems on $\mathbb{R}^n$
\begin{align}\label{1.3}
\Delta_p v + h(v) = 0.
\end{align}
In the case $h(v)= v^q$ this equation reduces to Lane-Emden-Fowler equation
\begin{align}\label{LEF}
\Delta_p v + v^q =0.
\end{align}
Serrin and Zou in \cite{MR1946918} proved that if $1<p<n$ and $q>0$, then equation \eqref{LEF} defined on $\R^n$ admits no positive solution if and only if
$$ 0<q<np/(n-p)-1.$$
Recently, inspired by Wang and Zhang \cite{MR2880214}, He, Wang and Wei \cite{MR4703505} considered
\begin{align}\label{1.5}
\Delta_p v + av^q =0 \quad\mbox{on}\,\, M
\end{align}
and used Moser iteration to gain the logarithm gradient estimate of the positive solution and extend largely the range of $q$ for Liouville theorem which combines with the result of Serrin and Zou \cite{MR1946918} to imply the optimal Liouville result for equation \eqref{LEF} defined on $\mathbb{R}^n$. More precisely, they showed that there is no positive solution of \eqref{1.5} defined on a complete Riemannian manifold of nonnegative Ricci curvature satisfying
$$a>0 \quad \&\quad  q<\frac{n+3}{n-1}(p-1)\quad \text{or} \quad a<0 \quad \& \quad  q>p-1.$$	
It should be mentioned that Huang, Guo and Guo \cite[Theorem 1]{MR4676651} posted a weaker result with additional restriction on $p$ which is not required in \cite{MR4703505}. Shortly after, He, Sun and Wang \cite{HSW} proved the optimal Liouville theorem for equation \eqref{LEF} defined on a noncompact Riemannian manifold $M$ with nonnegative Ricci curvature.

In a recent preprint, Wang,  Wei, and  Zhang \cite{WangWeiZhang} investigate the Liouville property and gradient estimates for positive solutions \eqref{LEF} on complete noncompact Riemannian manifolds. A key innovation is the removal of a prior volume growth assumption. The authors establish a volume growth estimate for manifolds satisfying a $\chi$-type Sobolev inequality, showing that $\mathrm{Vol}(B_r) \ge C r^{2\chi/(\chi-1)}$, which is of independent geometric interest. By employing a Nash-Moser iteration technique and a meticulous analysis of the linearized operator, they prove that if the $L^{\chi/(\chi-1)}$-norm of the negative part of the Ricci curvature is sufficiently small relative to the Sobolev constant and the volume growth order, then no positive solution exists for a wide range of parameters $p$ and $q$, including the harmonic ($a=0$) and Lane-Emden ($p=2$, $a=1$) cases. Furthermore, under the weaker condition that $\mathrm{Ric}_-$ is in $L^\gamma$ for some $\gamma > \chi/(\chi-1)$, they derive a local logarithmic gradient estimate. As a striking geometric application of their analytic results, they prove a gap theorem for the number of ends: a complete manifold of dimension $n\ge 3$ satisfying a Sobolev inequality and with $\|\mathrm{Ric}_-\|_{L^{n/2}}$ smaller than a constant depending only on $n$ and the Sobolev constant must have only one end.
\medskip

On the other hand, Pol\'a\v{c}ik, Quittner and Souplet \cite{PQS} studied some new connections between Liouville-type theorems and local properties of nonnegative solutions to the above superlinear elliptic problem \eqref{1.3}. Namely, they develop a general method for derivation of universal and pointwise {\it a priori} estimates of local solutions from Liouville type theorems, which provides a simpler and unified treatment for such questions. Concretely, they proved the following:

\smallskip
{\it
Let $0< p-1< \alpha <p^*-1$, where $p^*$ is the $p$ critical exponent, and assume that
$$\lim_{v\to+\infty}v^{-\alpha} h(v)=\ell \in(0,+\infty).$$
\noindent
Let $\Omega$ be an arbitrary domain of $\mathbb{R}^n$. Then there exists $C = C(p, n, h) > 0$ (independent of $\Omega$ and $v$) such
that for any (nonnegative) solution $v$ to $\Delta_p v + h(v) =0$ in $\Omega$, there holds
$$
v + |\nabla v|^{p/(\alpha+1)} \leq C\Big(1 + \big|dist(x,\partial\Omega)\big|^{-p/(\alpha+1-p)}\Big), \quad x \in \Omega.
$$
\noindent
In particular, if $\Omega = B_R \setminus \{0\}$ for some $R > 0$, then
$$v + |\nabla v|^{p/(\alpha+1)} \leq C\Big(1 + |x|^{-p/(\alpha+1-p)}\Big),\quad 0 < |x| \leq\frac{R}{2}.$$
}

Very recently, He, Ma and Wang \cite{MR4878821} considered the case $h(v)= bv^{q}+cv^{r}$ in \eqref{1.3} and established the local and global Cheng-Yau type gradient estimate of the positive solution, Liouville theorem and Harnack inequality for a class of $p$-Laplace equations defined on $(M^n, g)$ with $\Ric_g(M)\geq -(n-1)\kappa g$ written by
\begin{align*}
\begin{cases}
\Delta_{p}v+bv^{q}+cv^{r}=0,&\text{in }M\,;\\[2mm]
v>0,&\text{in }M\,;\\[2mm]
b,c\in\mathbb{R},\ p>1,\ q\leq r, \: n\geq 2,
\end{cases}
\end{align*}
where the parameters $(b,c,q,r)$ belong to the regions defined as follows:
\begin{align*}
W_{1} &=\left\{(b,c,q,r): b\left(\frac{n+1}{n-1}-\frac{q}{p-1}\right)\geq 0 \quad\text{and}\quad c\left(\frac{n+1}{n-1}-\frac{r}{p-1}\right)\geq 0\right\},
\\[2mm]
W_{2} &=\left\{(b,c,q,r):c\leq 0 \quad\text{and}\quad \left|\frac{q}{p-1}-\frac{n+1}{n-1}\right|<\frac{2}{n-1}\right\},
\\[2mm]
W_{3} &=\left\{(b,c,q,r):b\geq 0 \quad\text{and}\quad \left|\frac{r}{p-1}-\frac{n+1}{n-1}\right|<\frac{2}{n-1}\right\}.
\end{align*}
As direct consequences, they deduced that if $\kappa =0$ then $v\equiv1$ is the unique positive solution to the generalized Allen-Cahn equation:
\begin{align*}
\Delta_{p}v+v-v^{3}=0, \quad \dfrac{2n+2}{n+3} <p <4,
\end{align*}
and the generalized Fisher-KPP equation:
\begin{align*}
\Delta_{p}v+v-v^{2}=0,\quad\dfrac{2n+2}{n+3} <p <3.
\end{align*}

Regarding the famous  Allen-Cahn equation
\begin{align*}
\Delta v+v-v^{3}=0,
\end{align*}
which originates from the gradient theory of phase transitions and thus has an intricate connection to minimal surface theory, Hou \cite{MR3894900} derived gradient estimates of bounded positive smooth solutions from two cases, i.e., $ v\leq C \leq 1$ and $v \leq C $ with $C>1$ respectively, on the complete Riemannian manifolds with Ricci curvature bounded below $\mathrm{Ric}_g \geq K  g$ with $ K \geq 0$. Lu \cite{MR4735218} first obtained a logarithmic gradient estimate for the local positive solutions of Allen-Cahn equation under the same assumption on Ricci curvature.

We also refer to some papers on gradient estimate by  Nash-Moser iteration, please check  \cite{MR4951166, MR4938647, MR4892200, MR4828200, MR4748745, MR4957916, MR4882814, MR4867213} for several class of  quasilinear elliptic equations, \cite{MR4874664, MR4881820, MR4838045, MR4939105} for the semilinear elliptic equations and \cite{MR4841602,MR4887175} for some nonlinear parabolic equations. For further discussion on critical $p$-Laplacian equation, we refer the reader to  Catino, Monticelli and Roncoroni \cite{MR4649289}, Ou \cite{ou},  V\'etois \cite{MR4747988, MR4194414, MR3411668},  Catino and Monticeli \cite{onlinepaper}, Fogagnolo, Malchiodi and Mazzieri \cite{MR4567579}, and the references therein.
\medskip

Our motivation in this article is to consider the $p$-Laplacian equation with the polynomial nonlinearity $h(v)$ and establish Cheng-Yau type gradient estimates for its solutions without the restriction of positivity.
For simplicity we choose
$$h(v)=(v-a_1)(v-a_2)(v-a_3)$$
as a toy model. More precisely, we will derive the gradient estimates, Harnack inequalities and Liouville properties for the solutions to \eqref{GAC} with $a_1 < a_2 < a_3$.

\subsection{ Main results}\

Before stating the main results in this paper, we would like to give the definition of the weak solution to the $p $-Laplacian equation with cubic polynomial nonlinearity \eqref{GAC}.

\begin{definition}\label{definition}
The function $v$ is called the weak solution of the $p $-Laplacian equation with  cubic polynomial nonlinearity \eqref{GAC} on $\Omega$  if $v \in C^{1} (\Omega) \cap W^{1,p}_{loc} (\Omega)$  and satisfies the following equality
\begin{align*}
-\int_{\Omega} |\nabla v|^{p-2} \langle \nabla v, \nabla \psi\rangle + \int_{\Omega} (v-a_1)(v-a_2)(v-a_3)  \psi =0,\quad \forall\, \psi \in W^{1,p}_0(\Omega).
\end{align*}
\end{definition}

The first theorem is the classical Cheng-Yau type gradient estimate for  $p $-Laplacian equation with  cubic polynomial nonlinearity \eqref{GAC}.
\begin{theorem}\label{theorem1}
Let $(M,g) $ be an $n$-dimensional complete Riemannian manifold with $\mathrm{Ric}_g \geq -(n-1) \kappa g$, where $n\geq 2$ and $\kappa$ is a non-negative constant. Suppose that $v$ is a solution in the sense of Definition \ref{definition} to the $p $-Laplacian equation with cubic polynomial nonlinearity \eqref{GAC} on the geodesic ball $B_R(o) \subset M$ with radius $R$ and center at $o\in M$.
\begin{itemize}
\item Suppose $a_1 <v <a_2$  or $a_2< v< a_3$, and
\begin{align}\label{CY1}
p \in \left( \frac{2n+2}{n+3},\,\,\, 4\right)
\end{align}
then there holds the Cheng-Yau type  gradient estimate
\begin{align}
\sup\limits_{B_{R/2}(o)} \frac{|\nabla v|^2}{|v-a_1|^2}
\leq
C(n,p) \left(\frac{1+ \sqrt{\kappa} R }{R}\right)^2, \quad
{\rm  for } \: a_1 <v <a_2,
\label{Cheng-Yau1}
\end{align}
and
\begin{align}
\sup\limits_{B_{R/2}(o)} \frac{|\nabla v|^2}{|a_3-v|^2}
\leq
C(n,p) \left(\frac{1+ \sqrt{\kappa} R }{R}\right)^2, \quad
{\rm  for } \: a_2 <v <a_3.
\label{Cheng-Yau2}
\end{align}

\item Suppose $a_1 < v < a_3$  and
\begin{align}\label{CY2}
p\in \left[\dfrac{2n}{n+1}, \,\,\,\, \frac{2n}{n+1} + \frac{(n-1)(1 +\sqrt{1-\delta^2})}{(n+1)\delta^2} \right]\cap\:(1, \infty)
\end{align}
where $\delta := \frac{a_1 + a_3 - 2a_2}{a_3 - a_1}\in (-1,1)$. Then there holds the Cheng-Yau type  gradient estimate
\begin{align}
\sup\limits_{B_{R/2}(o)}  \frac{|\nabla v|^2}{4[(v - a_1)(a_3 - v)]^2}
\leq
\dfrac{C(n,p)}{(a_3 - a_1)^2} \left(\frac{1+ \sqrt{\kappa} R }{R}\right)^2.
\label{Cheng-Yau3}
\end{align}
\end{itemize}
\end{theorem}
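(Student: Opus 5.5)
We follow the Wang--Zhang / He--Wang--Wei Nash--Moser scheme. The proof has three steps: a change of variables, a lower bound for the linearized operator applied to $f=|\nabla u|^2$, and Saloff-Coste's Sobolev inequality on $B_R(o)$ combined with Moser iteration.

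\textbf{Step 1: change of variables.}
\begin{itemize}
\item For $a_1<v<a_2$ we set $u=-(p-1)\log(v-a_1)$. Writing $w=v-a_1>0$, the equation becomes $\Delta_p w=w(w-b)(c-w)$ with $b=a_2-a_1<c=a_3-a_1$. Then $u$ satisfies
\begin{align*}
\Delta_p u-|\nabla u|^p=(p-1)^{p-1}e^{\frac{(p-2)u}{p-1}}(w-b)(c-w),
\end{align*}
with $w=e^{-u/(p-1)}$.
\item The case $a_2<v<a_3$ is symmetric, using $w=a_3-v$.
\item For $a_1<v<a_3$ we use the hyperbolic tangent substitution $v=\frac{a_1+a_3}{2}+\frac{a_3-a_1}{2}\tanh(\cdot)$. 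Up to constants this is $u=\log\frac{v-a_1}{a_3-v}$, for which $u'(v)=\frac{a_3-a_1}{(v-a_1)(a_3-v)}$. So the quantity in \eqref{Cheng-Yau3} is $|\nabla u|^2/(a_3-a_1)^2$ up to the factor $4$. In this variable the nonlinearity becomes a function of $\tanh$ and $\operatorname{sech}^2$, and the parameter $\delta$ encodes the position of $a_2$.
\end{itemize}

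\textbf{Step 2: lower bound for the linearized operator.}
\begin{itemize}
\item Let $f=|\nabla u|^2$ and $\mathcal L(\psi)=\operatorname{div}\bigl(f^{p/2-1}A(\nabla\psi)\bigr)$, where $A=\mathrm{Id}+(p-2)\frac{\nabla u\otimes\nabla u}{f}$. We apply $\mathcal L$ to $f$.
\item The $p$-Bochner formula, the Ricci bound $\Ric\ge-(n-1)\kappa$, and the refined Kato/trace inequality $|\nabla^2u|^2\ge\frac{(\Delta u)^2}{n}$ applied in the direction splitting give a pointwise inequality:
\begin{align*}
\mathcal L(f)\ge \beta f^{\frac p2+1}-2(n-1)\kappa f^{\frac p2}-C|\nabla f|f^{\frac{p-1}{2}}.
\end{align*}
\item The key requirement is $\beta=\beta(n,p)>0$. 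Here $\Delta_pu$ is substituted from the transformed equation. The terms coming from the nonlinearity involve $e^{(p-2)u/(p-1)}(w-b)(c-w)$ and its derivative in $u$. They must be grouped into a quadratic form in $\bigl(f,\ \text{nonlinear term}\bigr)$.
\item Positivity of the discriminant of this quadratic form should be exactly condition \eqref{CY1}. For $w\in(0,b)$ the factor $(w-b)(c-w)$ has a fixed sign, and its derivative contributions have controllable sign. The endpoints $\frac{2n+2}{n+3}$ and $4$ should appear, as in the He--Ma--Wang result for $v-v^3$.
\end{itemize}

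\textbf{Step 3: Moser iteration.}
\begin{itemize}
\item Multiply the inequality by $f^{t}\eta^2$ with a cutoff $\eta$ and integrate by parts.
\item Use Saloff-Coste's Sobolev inequality on $B_R(o)$, with constant $e^{C(1+\sqrt\kappa R)}V^{-2/n}R^2$.
\item First obtain an $L^{\chi t_0}$ bound on $B_{3R/4}$ for $t_0\sim c(n,p)(1+\sqrt\kappa R)$, of the form $\|f\|\le C\frac{(1+\sqrt\kappa R)^2}{R^2}V^{1/\cdot}$. Then iterate to $L^\infty$ on $B_{R/2}$.
\item Undoing the substitution gives \eqref{Cheng-Yau1}--\eqref{Cheng-Yau3}. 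The factor $(a_3-a_1)^{-2}$ in \eqref{Cheng-Yau3} comes from normalising $u$, so that the interval length scales out.
\end{itemize}

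\textbf{Main obstacle.} The hard step is Step 2 in the $\tanh$ case. There the cubic nonlinearity changes sign inside $(a_1,a_3)$ at $a_2$, so the sign argument used in the first two cases is unavailable. Instead we would write the nonlinear contribution as $\operatorname{sech}^2(\cdot)\,(\tanh(\cdot)-\delta)$-type expressions. We then require the resulting quadratic form in $(\tanh,\ \text{gradient term})$ to be nonnegative uniformly for $\tanh\in(-1,1)$. Minimising over $\tanh$ should produce the window \eqref{CY2} with the $\sqrt{1-\delta^2}$ term. We would first try the choice of auxiliary parameter that optimises the Cauchy--Schwarz splitting, and check the endpoint $p=\frac{2n}{n+1}$ separately.
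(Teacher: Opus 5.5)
The overall architecture matches the paper's: logarithmic substitutions on $(a_1,a_2)$ and $(a_2,a_3)$, the $\tanh$ substitution on $(a_1,a_3)$, a pointwise lower bound for the linearized operator, then Saloff-Coste's inequality and Moser iteration with $t_0\sim c(n,p)(1+\sqrt{\kappa}R)$. Your Step~3 is fine. The gap is Step~2. Every restriction on $p$ originates there, and you only conjecture it, via mechanisms that do not work as stated.

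In the logarithmic case, first fix the sign: $\Delta_pu-|\nabla u|^p=N:=(p-1)^{p-1}w^{2-p}(b-w)(c-w)$, which is \emph{positive} on $(0,b)$. Substitute $\sum_{i\ge2}u_{ii}=f+Nf^{1-p/2}-(p-1)u_{11}$ and absorb the $u_{11}$-terms (the paper applies $\mathcal{L}$ to $f^{\alpha}$ with $\alpha$ large for this). One must then show $\frac{f^2}{n-1}+B_\alpha N^2f^{2-p}+Hf^{2-p/2}\ge\beta f^2$, where $H=\frac{dN}{du}+\frac{2}{n-1}N$ and $B_\alpha\to\frac{1}{n-1}$. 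The mixed coefficient of this quadratic form is not constant: $\frac{H}{N}=A_1+\frac{w(b+c-2w)}{(p-1)(b-w)(c-w)}$ with $A_1:=\frac{n+1}{n-1}-\frac{1}{p-1}$, and it tends to $+\infty$ as $w\to b$. Hence a discriminant condition fails near $v=a_2$ for every $p$, and \eqref{CY1} is not ``exactly'' such a condition. What is needed is $N>0$ together with a one-sided lower bound for $H/N$. The paper splits $H=(\theta+\frac{2}{n-1})N+R_\theta$ with $\theta=1-\frac{k}{p-1}$, $k=3,2,1$. Here $R_\theta\ge0$ on $(0,b)$, being a positive multiple of $w(2bc-(b+c)w)$, $w(bc-w^2)$, $w^2(b+c-2w)$ respectively. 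It then completes the square, and adds a regime where $H\ge0$ outright. \eqref{CY1} is the union $(\frac{4n}{n+3},4)\cup(\frac{3n+1}{n+3},3)\cup(\frac{2n+2}{n+3},2)\cup[\frac{2n}{n+1},\frac{4n-2}{n+1}]$. (The formula for $H/N$ already suffices: $H\ge A_1N$. Either $A_1\ge0$ and the $H$-term is nonnegative, or $A_1<0$ and completing the square works once $|A_1|<\frac{2}{n-1}$; together these cover \eqref{CY1}.)

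In the $\tanh$ case you also have not located the source of positivity, and it is not a quadratic form coupling $\tanh u$ with the gradient. With $v=\frac{a_1+a_3}{2}+L\tanh u$, $L=\frac{a_3-a_1}{2}$, the equation becomes $\Delta_pu-2(p-1)\tanh u\,|\nabla u|^p=\Phi(u):=L^{4-p}\operatorname{sech}^{4-2p}u\,(\delta+\tanh u)$ (Lemma~\ref{lemma3.5.1}). The key is the first-order term created by the substitution. Differentiating it contributes $2(p-1)\operatorname{sech}^2u\,f^2$ to $f^{1-p/2}\langle\nabla\Delta_pu,\nabla u\rangle$. Combined with $\frac{4(p-1)^2\tanh^2u}{n-1}f^2$ from $\frac{1}{n-1}(\sum_{i\ge2}u_{ii})^2$, and after absorbing the $u_{11}$ cross terms, this gives an $f^2$-coefficient at least $\min\{2(p-1),\frac{2(p-1)^2}{n-1}\}$, independent of the nonlinearity. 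The nonlinear terms then decouple into two pieces. One is $\frac{\Phi^2}{2(n-1)}f^{2-p}\ge0$. The other is $\bigl[\frac{4(p-1)\tanh u\,\Phi}{n-1}+\Phi'\bigr]f^{2-p/2}=L^{4-p}\operatorname{sech}^{4-2p}u\;\mathfrak{g}(\tanh u)\,f^{2-p/2}$, where $\mathfrak{g}(z)=\frac{2p(n+1)-5n+1}{n-1}z^2+\frac{2p(n+1)-4n}{n-1}\delta z+1$. Since this involves a single power of $f$, what one needs is $\mathfrak{g}\ge0$ on $(-1,1)$. The set of $p>1$ for which this holds is exactly \eqref{CY2}, with the $\sqrt{1-\delta^2}$ coming from the discriminant of $\mathfrak{g}$. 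Neither observation appears in your proposal, so as written it gives no reason why $\beta>0$ precisely under \eqref{CY2}.
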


\smallskip
\begin{remark}\label{remark1.1}
In previous papers such as \cite{MR4703505}, the logarithmic transformation plays a key role for the proofs of the Cheng-Yau type gradient estimates and other consequent results therein. We will follow the same method in the proofs of \eqref{Cheng-Yau1} and \eqref{Cheng-Yau2}, see Lemma \ref{lem:gradient_estimate_bounded} and Corollary \ref{corollary2.1}. The logarithmic transform  is effective because it converts power-type nonlinearities involving $v$ into exponential ones, which are convex functions of $w$ given by \eqref{transformation-log} or \eqref{transformation-log2}. Convexity ensures that linear combinations of different exponential terms, after completing squares in the nonlinear terms, yield a dominant positive term under suitable conditions, which is crucial for the lower bound estimates in Section \ref{section2.1}.

However, the effect of the standard logarithmic transformation fails in the proof of \eqref{Cheng-Yau3}. To recover the method, in Section \ref{section2.1} we will need to introduce the hyperbolic tangent transformation in \eqref{transformation-tangent}. Whence, Section \ref{section2.2} is the new ingredient of the present paper.
\end{remark}

Moreover, by applying the gradient established in the above we can also obtain some Liouville theorems and Harnack inequalities for the $p $-Laplacian equation with cubic polynomial nonlinearity \eqref{GAC} if the Ricci curvature of $(M,g)$ is bounded from below.

\begin{theorem}\label{theorem3}
Let $(M,g) $ be an $n$-dimensional non-compact and complete Riemannian manifold with non-negative Ricci curvature and $n\geq 2$.
\begin{itemize}
\item Suppose that p satisfies \eqref{CY1}, then the $p $-Laplacian equation with  cubic polynomial nonlinearity \eqref{GAC} admits no solution $ v$ which lies in  $(a_1,a_2)$ or $ (a_2,a_3)$.
\item Suppose that $p$ satisfies \eqref{CY2}. If the $p $-Laplacian equation with  cubic polynomial nonlinearity \eqref{GAC} admits a solution $ v$ which lies in $ (a_1,a_3)$, then $ v \equiv a_2.$
\end{itemize}
\end{theorem}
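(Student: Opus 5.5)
We derive Theorem \ref{theorem3} from Theorem \ref{theorem1} by letting $R\to\infty$. Since $\Ric_g\geq 0$, we may take $\kappa=0$, so the right-hand sides of \eqref{Cheng-Yau1}--\eqref{Cheng-Yau3} become $C(n,p)R^{-2}$. Because $M$ is complete and non-compact, every point $x\in M$ lies in $B_{R/2}(o)$ for all large $R$, and $v$ solves \eqref{GAC} on each $B_R(o)$. Fix $x$ and let $R\to\infty$: the corresponding quotient vanishes at $x$. Hence $\nabla v\equiv 0$ on $M$, with the appropriate positive denominator in each case: $v-a_1>0$, $a_3-v>0$, or $(v-a_1)(a_3-v)>0$. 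Since $M$ is connected, $v\equiv c$ is constant.

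Next we plug the constant into the equation. For $v\equiv c\in C^1$, the weak formulation in Definition \ref{definition} reduces to
\[
\int_M (c-a_1)(c-a_2)(c-a_3)\psi=0 \quad \text{for all } \psi\in W^{1,p}_0 ,
\]
so $(c-a_1)(c-a_2)(c-a_3)=0$, that is, $c\in\{a_1,a_2,a_3\}$.

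In the first item, \eqref{CY1} holds and $v$ takes values in the open interval $(a_1,a_2)$ or $(a_2,a_3)$. Neither interval contains a root, which is a contradiction. So no such solution exists.

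In the second item, \eqref{CY2} holds and $c\in(a_1,a_3)$. The only root in this interval is $a_2$, so $v\equiv a_2$. Conversely, $v\equiv a_2$ is indeed a solution, so this conclusion is sharp.

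There is no real obstacle here; all the analytic work is contained in Theorem \ref{theorem1}. The only point to check is that Theorem \ref{theorem1} applies on every ball $B_R(o)$ with constants independent of $R$. This holds because $C(n,p)$ does not depend on $R$ or on $v$. In the third case the factor $(a_3-a_1)^{-2}$ is also a fixed constant, so it causes no difficulty.
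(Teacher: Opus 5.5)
Your proposal is correct and follows essentially the same argument as the paper: set $\kappa=0$ in Theorem \ref{theorem1}, let $R\to\infty$ to get $\nabla v\equiv 0$, and note that a constant solution must be one of $a_1,a_2,a_3$, which is excluded from $(a_1,a_2)$ and $(a_2,a_3)$ and is forced to be $a_2$ in $(a_1,a_3)$. The differences are cosmetic: the paper centers the balls at the point $x_0$ itself rather than at a fixed $o$, and it reads off $\Delta_p v=0$ directly rather than going through the weak formulation.
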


\begin{theorem}\label{theorem4}
Let $(M,g) $ be an $n$-dimensional non-compact and complete Riemannian manifold with $\mathrm{Ric}_g \geq -(n-1) \kappa g$
where $\kappa$ is a nonnegative constant and $n\geq 2$, and  $ v$ be a  solution in the sense of Definition \ref{definition} to  the $p $-Laplacian equation with cubic polynomial nonlinearity \eqref{GAC}.
\begin{itemize}
\item If $a_1< v < a_2$ and $p$ satisfies \eqref{CY1}, then there holds the Harnack inequality:
\begin{align}
\dfrac{v(x)-a_1}{v(y)-a_1}\leq e^{C(n,p) (1 + \sqrt{\kappa} R)},\quad  \forall \:  x, y \in B_{R\slash 2}(x_0),\  \forall\, x_0\in M.
\label{Harnack1}
\end{align}

\item If $a_2< v < a_3$ and $p$ satisfies \eqref{CY1}, then there holds the Harnack inequality:
\begin{align}
        \dfrac{a_3- v(x)}{a_3- v(y)}
        \leq e^{C(n,p) (1 + \sqrt{\kappa} R)},\quad  \forall \: x, y \in B_{R\slash 2}(x_0),\  \forall\, x_0\in M.
        \label{Harnack2}
\end{align}

\item If $a_1 < v < a_3$ and $p$ satisfies \eqref{CY2}, there holds the Harnack inequality:
\begin{align}
\frac{\big(v(x)-a_1\big)\big(a_3-v(y)\big)}{\big(v(y)-a_1\big)\big(a_3-v(x)\big)} \leq e^{C(n,p) (1 + \sqrt{\kappa} R)},\quad  \forall \: x, y \in B_{R\slash 2}(x_0),\  \forall\, x_0\in M.
\label{Harnack3}
\end{align}
\end{itemize}
\end{theorem}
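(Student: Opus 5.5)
The Harnack inequalities follow from Theorem \ref{theorem1} by integrating the gradient bounds along minimizing geodesics. Fix $x_0\in M$ and $R>0$. Since $M$ is complete, Theorem \ref{theorem1} applies on the geodesic ball $B_{2R}(x_0)$ (or on $B_R(x_0)$, at the cost of a constant). On $B_R(x_0)$ this gives the bound
\[
\sup_{B_{R}(x_0)}\frac{|\nabla v|}{|v-a_1|}\le \sqrt{C(n,p)}\,\frac{1+2\sqrt{\kappa}R}{2R}\le C'(n,p)\,\frac{1+\sqrt{\kappa}R}{R}
\]
in the case $a_1<v<a_2$. The analogous bounds hold in the other two cases.

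\textbf{Case $a_1<v<a_2$.} Set $w=\log(v-a_1)$, which is $C^1$ because $v>a_1$. The bound above reads $|\nabla w|\le C'(1+\sqrt\kappa R)/R$ on $B_R(x_0)$. Take $x,y\in B_{R/2}(x_0)$ and a minimizing geodesic $\gamma$ from $y$ to $x$. It has length $d(x,y)<R$. By the triangle inequality every point of $\gamma$ lies within $R/2+R/2=R$ of $x_0$, so $\gamma\subset B_R(x_0)$ (up to a closure issue at the boundary, handled by using a slightly larger ball or the $2R$ ball). Then
\[
w(x)-w(y)=\int_\gamma \langle\nabla w,\dot\gamma\rangle\le d(x,y)\sup_{B_R}|\nabla w|\le C'(1+\sqrt\kappa R).
\]
Exponentiating gives \eqref{Harnack1}.

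\textbf{Case $a_2<v<a_3$.} The same argument with $w=\log(a_3-v)$ and \eqref{Cheng-Yau2} gives \eqref{Harnack2}.

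\textbf{Case $a_1<v<a_3$.} Use the logit-type function
\[
w=\log\frac{v-a_1}{a_3-v},\qquad \nabla w=\frac{(a_3-a_1)\nabla v}{(v-a_1)(a_3-v)}.
\]
Estimate \eqref{Cheng-Yau3} states $|\nabla v|^2/(4[(v-a_1)(a_3-v)]^2)\le C(a_3-a_1)^{-2}(1+\sqrt\kappa R)^2/R^2$. Hence $|\nabla w|\le 2\sqrt{C}\,(1+\sqrt\kappa R)/R$, independently of the $a_i$. Integrating along $\gamma$ as before gives $w(x)-w(y)\le C''(1+\sqrt\kappa R)$. Since $w(x)-w(y)$ is exactly the logarithm of the left side of \eqref{Harnack3}, this proves the third inequality.

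No step here is a real obstacle. The only care needed is to make sure the geodesic stays in the ball where the gradient estimate holds, which is why I apply Theorem \ref{theorem1} on a ball of twice the radius. Doubling the radius only changes $C(n,p)$, because $(1+2\sqrt\kappa R)/(2R)\le (1+\sqrt\kappa R)/R$.
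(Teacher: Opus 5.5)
Your argument is correct and is essentially the paper's: integrate the gradient bound from Theorem~\ref{theorem1} for $\log(v-a_1)$, $\log(a_3-v)$, or $\log\frac{v-a_1}{a_3-v}$ along a minimizing geodesic and exponentiate. These are the paper's $w$ up to the factors $-(p-1)$ and $\tfrac12$. The one difference is that the paper first lets $R\to\infty$ to get the global bound $|\nabla w|\le C(n,p)\sqrt{\kappa}$, which uses that $v$ solves \eqref{GAC} on all of $M$, and then integrates from $x_0$ to $x$ and to $y$. You instead apply the estimate on $B_{2R}(x_0)$ and check that the geodesic from $y$ to $x$ stays in $B_R(x_0)$, so your version is local and needs $v$ only on $B_{2R}(x_0)$.
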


\smallskip
\begin{remark}\label{remark1.2}
Notice that $(a_1,a_2) \subset (a_1,a_3)$ and $(a_2,a_3)\subset (a_1,a_3)$, and then the  Cheng-Yau type gradient estimate \eqref{Cheng-Yau3} in \autoref{theorem1} or the Harnack inequality \eqref{Harnack3} in \autoref{theorem4} is also true when $ v $ locates in $ (a_1,a_2) $ or $ (a_2,a_3)$ together with the validity of the assumption in \eqref{CY2}.
\end{remark}

The rest of this paper is organized as follows.
In Section \ref{lowerbound}, we will first transform the solution $v$ of \eqref{GAC} to another function $w$
in \eqref{transformation-log}, or \eqref{transformation-log2}, or \eqref{transformation-tangent}, and then derive the lower-bound estimates of $\mathcal{L} (|\nabla w |^{2\alpha})$ where ${\mathcal L}$ is the linearization of the $p$-Laplacian at $w$ and has also been given in \eqref{linearoperator}. The readers can refer to Lemma \ref{lem:gradient_estimate_bounded}, Corollary \ref{corollary2.1} and Lemma \ref{lemma3.5.2}.
In Section \ref{sec2.2}, by choosing $\alpha$ large enough and using Saloff-Coste's Sobolev inequality, we establish an integral inequality, which enables the local $L^\gamma$-upper bound estimate of the gradient to the weak solution to \eqref{GAC} in Section \ref{sec2.3}.
In Section \ref{sec2.4}, we start the Moser iteration to obtain the  gradient estimate in  Theorem \ref{theorem1}.
As  applications, we prove the Liouville theorem and Harnack inequality in Theorems \ref{theorem3} and \ref{theorem4}, see Section \ref{sec2.5}.

\section{Lower-bound estimate of $\mathcal{L}(f^{\alpha}) $ for bounded solutions}\label{lowerbound}

The linearization operator $\mathcal{L}$ of the $p$-Laplacian at any function $w$ will be denoted by
\begin{align}
\mathcal{L}(\psi) = \mathrm{div} \left( f^{p/2 -1}  \mathcal{A} (\nabla \psi)\right),
\label{linearoperator}
\end{align}
where
\begin{align}\label{definition-f}
f = |\nabla w|^2 \qquad\mbox{and}\qquad \mathcal{A}(\nabla \psi) = \nabla \psi  + (p-2) f^{-1}\langle\nabla\psi, \nabla w\rangle\nabla w.
\end{align}
Recall the identity from \cite[Lemma 2.3]{MR4703505}:
\begin{equation}\label{lemma3.1.3}
\begin{split}
\mathcal{L}(f^\alpha) = &\alpha\left(\alpha + \frac{p}{2} - 2\right) f^{\alpha + \frac{p}{2} - 3} |\nabla f|^2 + 2\alpha f^{\alpha + \frac{p}{2} - 2} \left( |\nabla \nabla w|^2 + \operatorname{Ric}(\nabla w, \nabla w) \right)
\\[2mm]
&+ \alpha(p-2)(\alpha - 1) f^{\alpha + \frac{p}{2} - 4} \langle \nabla f, \nabla w \rangle^2 + 2\alpha f^{\alpha - 1} \langle \nabla \Delta_p w, \nabla w \rangle,
\end{split}
\end{equation}
where $\alpha > 0$ is any constant.

\subsection{ The low-bound under the logarithmic transformation}\label{section2.1}\

In previous papers such as \cite{MR4703505}, the logarithmic transformation was chosen as a key step for the proofs of the results therein,
and we follow the same method in the proofs of Lemma \ref{lem:gradient_estimate_bounded} and Corollary \ref{corollary2.1} below.
However, in \autoref{remark2.1}, we will reveal that this transformation is incompetent to deal with another case, which will be concerned in Section \ref{section2.2}.

\begin{lemma}\label{lem:gradient_estimate_bounded}
Suppose $v$ is a solution (in the sense of \autoref{definition}) with the property $a_1 <v <a_2$, and
$p\in \left(\frac{2n+2}{n+3},\: 4\right)$, then there exist two constants $\alpha_0 = \alpha_0(n, p) > \frac{3}{2}$ and $\beta_{n,p,\alpha_0}>0$ defined in \eqref{constant3} such that the following inequality holds pointwisely on $\{x : f(x) > 0\}$:
\begin{align}\label{lowerbound1}
\dfrac{f^{2-\alpha_0-\frac{p}{2}}}{2\alpha_0}\mathcal{L}(f^{\alpha_0}) \geq \beta_{n,p,\alpha_0} f^{2} - (n-1)\kappa f - \dfrac{\mathfrak{C}_1}{2} |\nabla f| f^{ \frac{1}{2}},
\end{align}
where $\mathfrak{C}_1 = \left| p - \frac{2(p-1)}{n-1} \right|$, and $f = |\nabla w|^2$ with
\begin{align}
w=-(p-1)\ln \big(v-a_1\big).
\label{transformation-log}
\end{align}
\end{lemma}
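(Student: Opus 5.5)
We plan to follow the scheme of \cite{MR4703505}: rewrite \eqref{GAC} as an equation for $w$, insert it into the identity \eqref{lemma3.1.3}, and bound the Hessian term from below by a refined Kato-type inequality. With $s:=v-a_1=e^{-w/(p-1)}\in(0,a_2-a_1)$ we have $\nabla v=-\frac{s}{p-1}\nabla w$. A direct computation then gives
\begin{align*}
\Delta_p w = f^{p/2} + G(w),\qquad G(w):=(p-1)^{p-1}s^{2-p}(s-b)(s-c),\quad b=a_2-a_1,\ c=a_3-a_1 .
\end{align*}
Since $0<s<b<c$, we have $G>0$. Moreover $G$ is a linear combination of the three exponentials $e^{(p-4)w/(p-1)}$, $e^{(p-3)w/(p-1)}$ and $e^{(p-2)w/(p-1)}$, and $\langle\nabla G(w),\nabla w\rangle=G'(w)f$, where $G'(w)=-\frac{s}{p-1}\frac{dG}{ds}$.

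Next we treat the Hessian term on $\{f>0\}$. We work in an orthonormal frame with $e_1=\nabla w/|\nabla w|$ and split $|\nabla\nabla w|^2\ge w_{11}^2+2\sum_{j\ge2}w_{1j}^2+\frac{1}{n-1}\big(\sum_{j\ge2}w_{jj}\big)^2$. We then use $\sum_{j\ge2}w_{jj}=\Delta w-w_{11}$, $f^{p/2-1}\big(\Delta w+(p-2)w_{11}\big)=\Delta_p w$ and $\nabla f=2w_{1j}|\nabla w|e_j$. This expresses $\sum_{j\ge 2}w_{jj}=f^{1-p/2}(f^{p/2}+G)-(p-1)w_{11}$ in terms of $f$, $G$ and $\langle\nabla f,\nabla w\rangle$. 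Substituting into \eqref{lemma3.1.3} with $\alpha=\alpha_0$ large, we proceed exactly as in \cite[Lemma 2.4]{MR4703505}. The $|\nabla f|^2$ and $\langle\nabla f,\nabla w\rangle^2$ terms are nonnegative once $\alpha_0$ is chosen large depending on $n,p$ (this fixes $\alpha_0>\frac32$). The cross terms $f^{\cdot}\langle\nabla f,\nabla w\rangle(f^{p/2}+G)$ produce the term $-\frac{\mathfrak C_1}{2}|\nabla f|f^{1/2}$, with $\mathfrak C_1=|p-\frac{2(p-1)}{n-1}|$; here the $G$-part of that cross term has to be absorbed or estimated jointly, which we do by Cauchy--Schwarz against the positive $G^2$ term. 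The Ricci term gives $-(n-1)\kappa f$.

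After multiplying by $f^{2-\alpha_0-p/2}/(2\alpha_0)$, the remaining zeroth order part takes the form
\begin{align*}
\frac{1}{n-1}f^{2-p}\big(f^{p/2}+G\big)^2+\Big(1-\tfrac{p}{2}\Big)\cdots+ f^{2-p/2}\,G'(w)+\text{(small multiples of } f^{2-p/2}G),
\end{align*}
that is, a quadratic form in $X=f^{p/2}$ and $Y=G$ (with $G'$ as an extra linear coefficient), homogeneous after factoring $f^{2-p}$. The $X^2$ coefficient is positive, and strictly larger than what is lost in absorbing the gradient terms exactly when $p\in(\frac{2n+2}{n+3},4)$ and $\alpha_0$ is large; this is the He--Wang--Wei computation with $q$ replaced by the exponents $p-4,p-3,p-2$. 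The constant $\beta_{n,p,\alpha_0}$ will be this leftover $X^2$ coefficient.

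The main obstacle is showing that the mixed and $Y$-terms are nonnegative. Concretely, we need $c_1G^2+(c_2G+G')f^{p/2}\ge0$ for suitable positive constants $c_1,c_2$ depending on $n,p,\alpha_0$. We intend to verify this in the variable $s$. The quantity $c_2G+G'$ equals $(p-1)^{p-1}s^{2-p}$ times the polynomial
\begin{align*}
c_2(s-b)(s-c)-\frac{1}{p-1}\Big[(2-p)(s-b)(s-c)+s(2s-b-c)\Big].
\end{align*}
On $(0,b)$ the factor $(s-b)(s-c)$ is positive and $2s-b-c<0$. Hence, if $c_2\ge\frac{2-p}{p-1}$ (or if the negative part is dominated), this polynomial is positive; this is where $p<4$ and the convexity of the exponentials noted in Remark \ref{remark1.1} enter. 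If this fails for the natural $c_2$, we will try completing the square with $G^2$: write $c_1G^2+c_2Gf^{p/2}+\epsilon f^p\ge0$ using a small part $\epsilon$ of the $f^2$ coefficient, and check that $G'\ge -C\,G$ on $(0,b)$ by comparing leading exponentials as $s\to0$ (dominant term $bc\,s^{2-p}$) and as $s\to b$ (where $G\to0$ but $G'>0$).
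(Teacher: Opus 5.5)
Your reduction matches the paper's. After substituting $\sum_{i\ge2}w_{ii}=f+Gf^{1-p/2}-(p-1)w_{11}$ and absorbing the $G\,w_{11}$ cross term into $(2\alpha-1)(p-1)w_{11}^2$, everything comes down to bounding $\frac{1}{n-1}f^2+B_{n,p,\alpha}G^2f^{2-p}+Hf^{2-p/2}$ from below. Here $H=G'+\frac{2}{n-1}G$ and $B_{n,p,\alpha}=\frac{1}{n-1}-\frac{p-1}{(2\alpha-1)(n-1)}$. So your constants are not free: $c_2=\frac{2}{n-1}$ and $c_1=B_{n,p,\alpha}$.

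With this $c_2$, your polynomial is $\bigl(\frac{n+1}{n-1}-\frac{1}{p-1}\bigr)(s-b)(s-c)+\frac{s(b+c-2s)}{p-1}$, which is positive on $(0,b)$ as soon as $p\ge\frac{2n}{n+1}$. That is correct, and it is cleaner than the paper. It gives $H\ge0$ and $\beta=\frac{1}{n-1}$ for every $p\ge\frac{2n}{n+1}$ at once, which makes the paper's Cases 1, 2 and 4 unnecessary. (The paper's claim that $H$ turns negative for $p>\frac{4n-2}{n+1}$ uses the unconstrained vertex of $Q$, which actually lies to the right of $b$.) Note, however, that $p<4$ plays no role here. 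The restriction on $p$ also does not come from ``what is lost in absorbing the gradient terms''; your account of where the interval $(\frac{2n+2}{n+3},4)$ comes from is not what the computation gives.

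The genuine gap is the range $\frac{2n+2}{n+3}<p<\frac{2n}{n+1}$. There $H<0$ near $s=0$, so part of $\frac{1}{n-1}f^2$ must be spent, and your fallback does not reach the endpoint as written:
\begin{itemize}
\item Using only a small $\epsilon$ of the $f^2$ coefficient covers only $p$ slightly below $\frac{2n}{n+1}$.
\item A constant $C$ in $G'\ge -CG$ read off from endpoint asymptotics is neither justified on the interior of $(0,b)$ nor quantitatively controlled.
\end{itemize}

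What is needed is the sharp bound $G'+\frac{2-p}{p-1}G=(p-1)^{p-2}s^{3-p}(b+c-2s)\ge0$ on $(0,b)$. This is your own factorization with $c_2=\frac{2-p}{p-1}$. It gives $H\ge-\mu G$ with $\mu=\frac{2-p}{p-1}-\frac{2}{n-1}=\frac{1}{p-1}-\frac{n+1}{n-1}>0$.

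Then you must use the whole quadratic form. With $X=f^{p/2}$ and $Y=G$,
\[
\tfrac{1}{n-1}X^2-\mu XY+B_{n,p,\alpha}Y^2\ge\bigl(\tfrac{1}{n-1}-\tfrac{\mu^2}{4B_{n,p,\alpha}}\bigr)X^2 ,
\]
and you take $\alpha_0$ large so that $B_{n,p,\alpha_0}$ is close to $\frac{1}{n-1}$. In particular, the cost of absorbing the $G\,w_{11}$ cross term must vanish as $\alpha\to\infty$, rather than being a fixed fraction of $G^2$.

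The limiting coefficient $\frac{1}{n-1}-\frac{(n-1)\mu^2}{4}$ is positive iff $\mu<\frac{2}{n-1}$, i.e.\ exactly when $p>\frac{2n+2}{n+3}$, and it degenerates to $0$ at that endpoint. This is the paper's Case 3. Without it, your argument proves the lemma only for $p\ge\frac{2n}{n+1}$.
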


\begin{proof}
\textbf{Step 1. Equation Transformation.} \\
Let $u = v - a_1$, then $  0<u<b $ with $b = a_2 - a_1 > 0$ and $c = a_3 - a_1 > b$,
\[
\Delta_p u + u(u - b)(u - c) = 0.
\]
Applying the logarithmic transformation
$$
u = e^{-\frac{w}{p-1}},\qquad\mbox{i.e.,}\quad w = -(p-1)\ln u,
$$
we obtain
\begin{equation}\label{lemma3.1.1}
\Delta_p w - |\nabla w|^p - (p-1)^{p-1} e^w \left[ e^{-\frac{3w}{p-1}} - (b + c)e^{-\frac{2w}{p-1}} + bc\, e^{-\frac{w}{p-1}} \right] = 0.
\end{equation}
Define a function
$$
f = |\nabla w|^2
$$
and the following constants
\begin{equation}\label{constant1}
\begin{aligned}
&b_1 = (p-1)^{p-1}, \quad c_1 = -(b + c)(p-1)^{p-1}, \quad d_1 = bc(p-1)^{p-1},\\[2mm]
&q_1 = 1-\frac{3}{p-1}, \quad r_1 = 1-\frac{2}{p-1}, \quad s_1 = 1-\frac{1}{p-1}.
\end{aligned}
\end{equation}
We rewrite the equation \eqref{lemma3.1.1} as
\begin{equation}\label{lemma3.1.2}
\Delta_p w - f^{p/2} - b_1 e^{q_1 w} - c_1 e^{r_1 w}- d_1 e^{s_1 w} = 0.
\end{equation}
\medskip

\noindent
\textbf{Step 2. Pointwise Estimate for $\mathcal{L}(f^\alpha)$.}\\
In order to compute the terms in \eqref{lemma3.1.3} with $w$ satisfying \eqref{lemma3.1.2},
we choose $\{e_1, e_2, \cdots, e_n \}$ as an orthonormal frame of  $TM$ on a domain with $f \neq 0$ such that $e_1 = {\nabla w}/{|\nabla w|}$.

\smallskip
\noindent$\clubsuit$\
By some computations, we  first give  the following facts
\begin{align}\label{3.1}
w_1 := \frac{\partial w}{\partial {e_1} } = f^\frac{1}{2},
\qquad
w_{11} = \partial_{e_1} f^\frac{1}{2} = \frac{1}{2} f^{-\frac{1}{2}} \partial_{e_1}f
        =\frac{1}{2} f^{-1} \langle \nabla w, \nabla f\rangle.
    \end{align}

\smallskip
\noindent$\clubsuit$\
By using $w_1 = f^\frac{1}{2}$, we derive
\begin{align*}
    \frac{|\nabla f|^2 }{f} = 4 \sum_{i=1}^n w_{1i}^2,
\end{align*}
which gives
\begin{align*}
    \frac{|\nabla f|^2 }{f} \geq 4w_{11}^2,
\end{align*}
and
\begin{align}\label{3.4}
|\nabla \nabla w|^2\geq \sum_{i=1}^n w_{1i}^2 +\sum_{i=2}^n w_{ii}^2\geq\frac{|\nabla f|^2 }{4f} + \frac{1}{n-1} \left(\sum_{i=2}^n w_{ii}\right)^2.
\end{align}
With respect to the above frames, $\Delta_p w$ owns the following expression that (check in \cite{MR2518892, MR834612})
\begin{align*}
\Delta_p  w= f^{\frac{p}{2}-1}\left((p-1) w_{11} + \sum_{i=2}^n w_{ii}\right).
\end{align*}
And then, substituting this expression into \eqref{lemma3.1.2},  it gives
\begin{align*}
(p-1) w_{11} + \sum_{i=2}^n w_{ii} = f +\big[b_1 e^{q_1 w} + c_1 e^{r_1 w} + d_1 e^{s_1 w}\big]f^{1-\frac{p}{2}}
\end{align*}
and thus,
\begin{align}\label{lemma3.1.6}
\left(\sum_{i=2}^n w_{ii}\right)^2
= &\left(f +\big[b_1 e^{q_1 w} + c_1 e^{r_1 w} + d_1 e^{s_1 w}\big]f^{1-\frac{p}{2}} - (p-1) w_{11}\right)^2\nonumber\\[2mm]
= & f^2 + \big[b_1 e^{q_1 w} + c_1 e^{r_1 w} + d_1 e^{s_1 w}\big]^2 f^{2-p} + (p-1)^2 w_{11}^2\nonumber\\
&+ 2 \big[b_1 e^{q_1 w} + c_1 e^{r_1 w} + d_1 e^{s_1 w}\big] f^{2 -\frac{p}{2}}-2f (p-1) w_{11}\nonumber\\[2mm]
&- 2(p-1)\big[b_1 e^{q_1 w}+ c_1 e^{r_1 w} + d_1 e^{s_1 w}\big] f^{1 -\frac{p}{2}} w_{11}.
\end{align}

\smallskip
\noindent$\clubsuit$\ We gain the identity  from \eqref{lemma3.1.2}
\begin{equation}\label{lemma3.1.4}
\langle \nabla \Delta_p w, \nabla w \rangle = \frac{p}{2} f^{p/2 - 1} \langle \nabla f, \nabla w \rangle
\ +\ \left[ b_1 q_1 e^{q_1 w} + c_1 r_1 e^{r_1 w} + d_1 s_1 e^{s_1 w} \right] f.
\end{equation}

Substituting \eqref{3.1}-\eqref{3.4} and \eqref{lemma3.1.6}-\eqref{lemma3.1.4} into \eqref{lemma3.1.3}, we obtain
\begin{align}
\frac{f^{2-\alpha-\frac{p}{2}}}{2\alpha} \mathcal{L}(f^\alpha)
&\geq
(2\alpha - 1)(p-1) w_{11}^2
+  \operatorname{Ric}(\nabla w, \nabla w)
+ \frac{1}{n-1}   \left(\sum_{i=2}^n w_{ii}\right)^2
\nonumber\\[2mm]
&\quad + pf w_{11}
+ \big[b_1 q_1 e^{q_1 w} + c_1 r_1 e^{r_1 w} + d_1 s_1 e^{s_1 w}\big] f^{2-\frac{p}{2}}
\nonumber\\[2mm]
&\geq (2\alpha - 1)(p-1) w_{11}^2 +\operatorname{Ric}(\nabla w, \nabla w)+  \frac{f^2}{n-1}  + \left( p - \frac{2(p-1)}{n-1} \right) f w_{11}
 \nonumber\\[2mm]
&\quad+ \frac{\big[b_1 e^{q_1 w} + c_1 e^{r_1 w} + d_1 e^{s_1 w}\big]^2}{n-1} f^{2 - p} + H(w) f^{2 - p/2}
 \nonumber\\[2mm]
&\quad- \frac{2(p-1)\big[b_1 e^{q_1 w} + c_1 e^{r_1 w} + d_1 e^{s_1 w}\big]}{n-1} f^{1 - p/2} w_{11}
\label{lemma3.1.5}
\end{align}
where
\[
H(w) := b_1 \left( q_1 + \frac{2}{n-1} \right) e^{q_1 w} + c_1 \left( r_1 + \frac{2}{n-1} \right) e^{r_1 w} + d_1 \left( s_1 + \frac{2}{n-1} \right) e^{s_1 w}.
\]
Moreover, we notice that
\begin{align*}
&(2\alpha - 1)(p-1) w_{11}^2  - \frac{2(p-1)\big[b_1 e^{q_1 w} + c_1 e^{r_1 w} + d_1 e^{s_1 w}\big]}{n-1} f^{1 - \frac{p}{2}} w_{11}\\[2mm]
&\geq - \frac{(p-1) \big[b_1 e^{q_1 w} + c_1 e^{r_1 w} + d_1 e^{s_1 w}\big]^2 }{(2\alpha - 1) (n-1)} f^{2-p},
\end{align*}
and then find that \eqref{lemma3.1.5} reduces to
\begin{equation}\label{lemma3.1.8}
\begin{aligned}
\frac{f^{2-\alpha-\frac{p}{2}}}{2\alpha} \mathcal{L}(f^\alpha)
&\geq\frac{f^2}{n-1}+B_{n,p,\alpha}  \big[b_1 e^{q_1 w} + c_1 e^{r_1 w} + d_1 e^{s_1 w}\big]^2 f^{2-p}\\
&\quad +\operatorname{Ric}(\nabla w, \nabla w) - \frac{\mathfrak{C}_1}{2}f^{\frac{1}{2}} |\nabla f| + H(w) f^{2 - p/2}
\end{aligned}
\end{equation}
where
$$
\mathfrak{C}_1 := \left| p - \frac{2(p-1)}{n-1} \right|
\qquad \mbox{and}\qquad
B_{n,p,\alpha}  := \frac{1}{n-1} - \frac{p-1}{(2\alpha - 1) (n-1)}\  \overset{\alpha \to \infty}\longrightarrow\ \frac{1}{n-1}>0.
$$

\medskip
\noindent
\textbf{Step 3.  Analysis of $H(w)$.}\\
We will impose some additional assumptions on the range of $p $ to cancel the negativity from $H(w)$.

\noindent {\textbf{Case 1: }}
First of all, we consider the assumption
\begin{equation} \label{condition_q}
\left|\frac{3}{p-1} - \frac{n+1}{n-1}\right| < \frac{2}{n-1},\quad \text{i.e.}  \quad \frac{4n}{n+3} < p < 4.
\end{equation}
Notice that
\begin{align*}
H(w)=\left( q_1 + \frac{2}{n-1} \right)\big[b_1 e^{q_1 w} + c_1 e^{r_1 w} + d_1 e^{s_1 w}\big]+(r_1 - q_1)c_1 e^{r_1 w} + (s_1 - q_1)d_1 e^{s_1 w},
\end{align*}
we deduce the following lower-bound by the basic inequality $A^2 + 2AB \geq -B^2$
\begin{align*}
&B_{n,p,\alpha}  \big[b_1 e^{q_1 w} + c_1 e^{r_1 w} + d_1 e^{s_1 w}\big]^2 f^{2-p} + H(w)f^{2 - p/2}\\[2mm]
\geq &-\dfrac{\left(q_1 + \frac{2}{n-1} \right)^2}{4 B_{n,p,\alpha}} f^{2}+ \left[(r_1 - q_1)c_1 e^{r_1 w} + (s_1 - q_1)d_1 e^{s_1 w}\right]f^{2 - p/2}\\[2mm]
\overset{\eqref{constant1}}{=}&-\dfrac{\left(q_1 + \frac{2}{n-1} \right)^2}{4 B_{n,p,\alpha}} f^{2} + (p-1)^{p-2}e^w \left[-(b+c) u^2+ 2bc u\right]f^{2 - p/2} \geq -\dfrac{\left(q_1 + \frac{2}{n-1}\right)^2}{4 B_{n,p,\alpha}} f^{2}
\end{align*}
where the last inequality has  used the facts that $ b,\, c>0,\: p>1 $ and $0<u <b$.

Consequently, we substitute the estimate above into \eqref{lemma3.1.8} and obtain
\begin{equation*}
\frac{f^{2-\alpha-\frac{p}{2}}}{2\alpha} \mathcal{L}(f^\alpha)\geq\left( \frac{1}{n-1}-\dfrac{\left(q_1 + \frac{2}{n-1} \right)^2}{4 B_{n,p,\alpha}}\right)f^2+\operatorname{Ric}(\nabla w, \nabla w) - \frac{\mathfrak{C}_1}{2}f^{\frac{1}{2}} |\nabla f|.
\end{equation*}
Under the condition $\left|\frac{3}{p-1} - \frac{n+1}{n-1}\right| < \frac{2}{n-1}$, it is trivial that
\begin{align*}
\lim_{\alpha \to +\infty } \beta_{n,p,\alpha} := \lim_{\alpha \to +\infty }\left(\frac{1}{n-1}-\dfrac{\left(q_1 + \frac{2}{n-1} \right)^2}{4 B_{n,p,\alpha}}\right)=\frac{1}{n-1} - \frac{n-1}{4} \left( \frac{3}{p-1} - \frac{n+1}{n-1}\right)^2 >0.
\end{align*}
Thus, there exists a constant $\alpha_1 \geq \dfrac{3}{2}$ in such a way that $\beta_{n,p,\alpha}>0, \,\forall\, \alpha \geq \alpha_1$ and
\begin{equation*}
\frac{f^{2-\alpha-\frac{p}{2}}}{2\alpha} \mathcal{L}(f^\alpha)\geq\beta_{n,p,\alpha}f^2 +\operatorname{Ric}(\nabla w, \nabla w) - \frac{\mathfrak{C}_1}{2}f^{\frac{1}{2}} |\nabla f|.
\end{equation*}

\noindent {\textbf{Case 2: }}
Now we consider the assumption that
\begin{equation} \label{condition_r}
\left|\frac{2}{p-1} - \frac{n+1}{n-1}\right| < \frac{2}{n-1},\quad \text{i.e.}  \quad\frac{3n+1}{n+3} < p < 3.
\end{equation}
We rewrite $H(w)$ by extracting the $r_1$-dominant term as follows
\begin{align*}
H(w) = &\left(r_1 + \frac{2}{n-1}\right)\big[b_1 e^{q_1 w} + c_1 e^{r_1 w} + d_1 e^{s_1 w}\big] + (q_1 - r_1)b_1 e^{q_1 w} + (s_1 - r_1)d_1 e^{s_1 w}.
\end{align*}
As same as before, we obtain a similar estimate by the basic inequality
\begin{align*}
&B_{n,p,\alpha}  \big[b_1 e^{q_1 w} + c_1 e^{r_1 w} + d_1 e^{s_1 w}\big]^2 f^{2-p} + H(w)  f^{2 - p/2}\\[2mm]
\geq&-\dfrac{\left(r_1 + \frac{2}{n-1} \right)^2}{4 B_{n,p,\alpha}} f^{2} + \left[(q_1 - r_1)b_1 e^{q_1 w} + (s_1 - r_1)d_1 e^{s_1 w}\right] f^{2 - p/2}\\[2mm]
\overset{\eqref{constant1}}{=}& -\dfrac{\left(r_1 + \frac{2}{n-1} \right)^2}{4 B_{n,p,\alpha}} f^{2} + (p-1)^{p-2} e^w\left(- u^3 + bc u\right) f^{2 - p/2} \geq -\dfrac{\left(r_1 + \frac{2}{n-1} \right)^2}{4 B_{n,p,\alpha}} f^{2}
\end{align*}
where in the last inequality we have used the fact that
$$
-u^3 + bc u = u(bc - u^2) \geq u(bc - b^2) = ub(c - b) > 0
$$
due to $ c>b>0,\: p>1 $ and $ u = e^{-\frac{w}{p-1}}\in (0, b)$.
As a result, we gain
\begin{equation*}
\frac{f^{2-\alpha-\frac{p}{2}}}{2\alpha} \mathcal{L}(f^\alpha)\geq \left( \frac{1}{n-1}-\dfrac{\left(r_1 + \frac{2}{n-1} \right)^2}{4 B_{n,p,\alpha}}\right)f^2 +\operatorname{Ric}(\nabla w, \nabla w) - \frac{\mathfrak{C}_1}{2}f^{\frac{1}{2}} |\nabla f|.
\end{equation*}
The assumption in \eqref{condition_r} yields
\begin{align*}
\lim_{\alpha \to +\infty } \beta_{n,p,\alpha}:= \lim_{\alpha \to +\infty}\left(\frac{1}{n-1}-\dfrac{\left(r_1 + \frac{2}{n-1} \right)^2}{4 B_{n,p,\alpha}}\right)= \frac{1}{n-1} - \frac{n-1}{4} \left( \frac{2}{p-1} - \frac{n+1}{n-1}\right)^2 >0.
\end{align*}
Thus, there exists a constant $\alpha_2 \geq \dfrac{3}{2}$ such that $\beta_{n,p,\alpha}>0, \, \,\forall\, \alpha \geq \alpha_2$ and
\begin{equation*}
\frac{f^{2-\alpha-\frac{p}{2}}}{2\alpha} \mathcal{L}(f^\alpha)\geq \beta_{n,p,\alpha}f^2 +\operatorname{Ric}(\nabla w, \nabla w) - \frac{\mathfrak{C}_1}{2}f^{\frac{1}{2}} |\nabla f|.
\end{equation*}

\noindent {\textbf{Case 3: }}
In the following, we consider the case that the linear term plays a leading role, and thus it is required to assume the constraint
\begin{equation}\label{condition_s}
\left|\frac{1}{p-1} - \frac{n+1}{n-1}\right| < \frac{2}{n-1},\quad \text{i.e.}  \quad  \frac{2n+2}{n+3} < p < 2.
\end{equation}
The strategy is totally same as in the cases before.
Rewrite $H(w)$ as
\begin{align*}
H(w) =  \left(s_1 + \frac{2}{n-1}\right)\big[b_1 e^{q_1 w} + c_1 e^{r_1 w} + d_1 e^{s_1 w}\big] + (q_1 - s_1)b_1 e^{q_1 w}+(r_1 - s_1)c_1 e^{r_1 w},
\end{align*}
which together with the basic inequality implies
\begin{align*}
&B_{n,p,\alpha}  \big[b_1 e^{q_1 w} + c_1 e^{r_1 w} + d_1 e^{s_1 w}\big]^2 f^{2-p} + H(w)  f^{2 - p/2}\\[2mm]
\geq &-\dfrac{\left(s_1 + \frac{2}{n-1} \right)^2}{4 B_{n,p,\alpha}} f^{2} + \left[(q_1 - s_1)b_1 e^{q_1 w}+(r_1 - s_1)c_1 e^{r_1 w}\right]f^{2 - p/2}\\[2mm]
\overset{\eqref{constant1}}{=}& -\dfrac{\left(s_1 + \frac{2}{n-1} \right)^2}{4 B_{n,p,\alpha}} f^{2}
+(p-1)^{p-2}e^w\left[ -2u^3 + (b+c)u^2\right] f^{2 - p/2}
\geq-\dfrac{\left(s_1 + \frac{2}{n-1} \right)^2}{4 B_{n,p,\alpha}} f^{2}
\end{align*}
where the last inequality has used the fact that
$$
(b+c) - 2u > (b+c) - 2b = c - b > 0, \quad \forall\, u = e^{-\frac{w}{p-1}}\in (0, b).
$$
As a consequence, we still have
\begin{equation*}
\frac{f^{2-\alpha-\frac{p}{2}}}{2\alpha} \mathcal{L}(f^\alpha)\geq\left( \frac{1}{n-1}-\dfrac{\left(s_1 + \frac{2}{n-1} \right)^2}{4 B_{n,p,\alpha}}\right)f^2 +\operatorname{Ric}(\nabla w, \nabla w) - \frac{\mathfrak{C}_1}{2}f^{\frac{1}{2}} |\nabla f|
\end{equation*}
where, under the condition \eqref{condition_s},
\begin{align*}
\lim_{\alpha \to +\infty } \beta_{n,p,\alpha} := \lim_{\alpha \to +\infty }\left(\frac{1}{n-1}-\dfrac{\left(s_1 + \frac{2}{n-1} \right)^2}{4 B_{n,p,\alpha}}\right)=\frac{1}{n-1} - \frac{n-1}{4} \left( \frac{1}{p-1} - \frac{n+1}{n-1}\right)^2 >0.
\end{align*}
Thus, there exists a constant $\alpha_3 \geq \dfrac{3}{2}$ such that $\beta_{n,p,\alpha}>0, \,\forall\, \alpha \geq \alpha_3$ and
\begin{equation*}
\frac{f^{2-\alpha-\frac{p}{2}}}{2\alpha} \mathcal{L}(f^\alpha) \geq \beta_{n,p,\alpha}f^2 +\operatorname{Ric}(\nabla w, \nabla w)- \frac{\mathfrak{C}_1}{2}f^{\frac{1}{2}} |\nabla f|.
\end{equation*}

\noindent{\textbf{Case 4:}}
The admissible range above for  $p$ significantly depending on which of the three nonlinear terms is taken as the principal part. In the present case, we will treat the three terms as a combined function for analysis.

Recall $u= e^{-\frac{w}{p-1}} $ and the definition of $H(w)$, we are enable to obtain
\[
\begin{aligned}
H(w)
= (p-1)^{p-1} u \left[\left(\frac{n+1}{n-1}-\frac{3}{p-1}\right)u^2-(b+c)
\left(\frac{n+1}{n-1}-\frac{2}{p-1}\right)u
+bc\left(\frac{n+1}{n-1}-\frac{1}{p-1}\right)\right].
\end{aligned}
\]
Since $u > 0$ and $(p-1)^{p-1} > 0$, the sign of $H(w)$ is determined by the quadratic
\begin{align*}
Q(u):=\left(\frac{n+1}{n-1}-\frac{3}{p-1}\right)u^2-(b+c)\left(\frac{n+1}{n-1}-\frac{2}{p-1}\right)u
+ bc\left(\frac{n+1}{n-1}-\frac{1}{p-1}\right).
\end{align*}

We need that $Q(u) \geq 0$ for all $u \in (0, b)$, which will imply $ H(w)\geq0$. At the endpoints we have
\[
Q(0)=bc\left(\frac{n+1}{n-1}-\frac{1}{p-1}\right),\qquad
Q(b)=\frac{b(c-b)}{p-1}>0.
\]
Hence $Q(0)\ge0$ together with $b>0, c > 0$ will force
$$\dfrac{n+1}{n-1}-\dfrac{1}{p-1}\ge0,\quad\mbox{i.e.}, \quad p\ge\frac{2n}{n+1}.$$

On the other hand, $p-\frac{4n-2}{n+1}$ and $ \frac{n+1}{n-1} - \frac{3}{p-1}$ have the same sign.
Thus, we need to consider the following three cases.
\smallskip

\noindent$\clubsuit$\
If $$ \dfrac{n+1}{n-1} - \dfrac{3}{p-1} <0,$$
the infimum of $ Q(u)$ on $(0,b)$ occurs at one of the endpoints. Therefore,
$$Q(u) \geq \min\{Q(0), Q(b)\} \geq 0$$
for all $u \in (0, b)$.
\smallskip

\noindent$\clubsuit$\
If  $$\dfrac{n+1}{n-1} - \dfrac{3}{p-1}=0,$$
i.e., the quadratic degenerates to a linear function
$$Q(u) =- \dfrac{b+c}{p-1}u + \dfrac{2bc}{p-1},$$
we observe
$$\inf\limits_{u\in (0,b)} Q(u) = Q(b)>0.$$
\smallskip

\noindent$\clubsuit$\
The nonnegativity of $Q(u)$ with $u \in (0,b)$ fails when
$$ p>\dfrac{4n-2}{n+1},\quad\mbox{i.e.},\quad \dfrac{n+1}{n-1}-\dfrac{3}{p-1}>0.$$
In fact,
\begin{align*}
\min\limits_{u\in (0,b)} Q(u)
&=\dfrac{4bc \left(\dfrac{n+1}{n-1}-\dfrac{1}{p-1}\right)\left(\dfrac{n+1}{n-1}-\dfrac{3}{p-1}\right)- (b+c)^2 \left(\dfrac{n+1}{n-1}-\dfrac{2}{p-1}\right)^2}{4\left(\dfrac{n+1}{n-1}-\dfrac{3}{p-1}\right)} \\
&=\dfrac{4bc \left[\left(\dfrac{n+1}{n-1}-\dfrac{2}{p-1}\right)^2- \left(\dfrac{1}{p-1}\right)^2\right]- (b+c)^2 \left(\dfrac{n+1}{n-1}-\dfrac{2}{p-1}\right)^2}{4\left(\dfrac{n+1}{n-1}-\dfrac{3}{p-1}\right)} \\
&=\dfrac{-(b-c)^2 \left(\dfrac{n+1}{n-1}-\dfrac{2}{p-1}\right)^2- 4bc \left(\dfrac{1}{p-1}\right)^2}{4\left(\dfrac{n+1}{n-1}-\dfrac{3}{p-1}\right)}
<0.
\end{align*}

Therefore, we impose that
\begin{align}
\frac{2n}{n+1} \le p \le \frac{4n-2}{n+1}.
\end{align}
Notice that
$$\lim_{\alpha \to +\infty} B_{n,p,\alpha} = \dfrac{1}{n-1}>0,$$
there exists a constant $\alpha_4 = \alpha_4(n,p) \geq \frac{3}{2}$ such that inequality \eqref{lemma3.1.8} is bounded by
\begin{equation*}
\frac{f^{2-\alpha-\frac{p}{2}}}{2\alpha} \mathcal{L}(f^\alpha)\geq\frac{1}{n-1}f^2+\operatorname{Ric}(\nabla w, \nabla w)- \frac{\mathfrak{C}_1}{2} f^{\frac{1}{2}} |\nabla f|.
\end{equation*}
Thus, we finish the analysis for \textbf{Case 4}.
\medskip

In order to draw a conclusion of \textbf{Step 3}, we denote
\begin{equation*}
\alpha_0 := \alpha_0(n,p)=
\begin{cases}
\alpha_1,\quad \dfrac{4n}{n+3} < p < 4; \\[2mm]
\alpha_2,\quad \dfrac{3n+1}{n+3} < p < 3; \\[2mm]
\alpha_3,\quad \dfrac{2n+2}{n+3} < p < 2;\\[2mm]
\alpha_4,\quad \dfrac{2n}{n+1} \le p \le\dfrac{4n-2}{n+1};
\end{cases}
\end{equation*}
and
\begin{equation}\label{constant3}
\beta_{n,p,\alpha_0} :=
\begin{cases}
\beta_{n,p,\alpha_1}, \quad \dfrac{4n}{n+3} < p < 4; \\[2mm]
\beta_{n,p,\alpha_2},\quad \dfrac{3n+1}{n+3} < p < 3; \\[2mm]
\beta_{n,p,\alpha_3},\quad  \dfrac{2n+2}{n+3} <p < 2; \\[2mm]
\dfrac{1}{n-1},\quad\dfrac{2n}{n+1}\leq p \leq \dfrac{4n-2}{n+1}.
\end{cases}
\end{equation}
It is noted that the above four intervals may intersect, which means the corresponding parameter value, $\alpha_0$ or $\beta_{n,p,\alpha_0}$, is not unique. Therefore, it is specified that the smaller parameter is taken when they intersect. Since
$$\operatorname{Ric}(\nabla w, \nabla w) \geq -(n-1)\kappa f,$$
we summarize all the above results into a unified expression
\[
\mathcal{L}(f^{\alpha_0}) \geq 2\alpha_0 \beta_{n,p,\alpha_0} f^{\alpha_0 + \frac{p}{2}} - 2\alpha_0 (n-1)\kappa f^{\alpha_0 + \frac{p}{2} - 1} - \alpha_0 \mathfrak{C}_1 |\nabla f| f^{\alpha_0 + \frac{p - 3}{2}}.
\]
This completes the proof.
\end{proof}

\smallskip
\begin{remark}\label{remark2.1}
In {\textbf{Step 3}} of the above proof,  the interval $ (0,b)$ for $u$ can be extended to a maximal one in such a way that the analysis still works.
After a direct calculation,  we can replace $(0,\, b)$ by $(0,\, \frac{2bc}{b+c})$ in Case 1, $(0,\, \sqrt{bc})$ in Case 2, $(0, \frac{b+c}{2})$ in Case 3, and
$$
\left(0,\quad \frac{(b+c)\left(\frac{n+1}{n-1}-\frac{2}{p-1}\right) + \sqrt{(b-c)^2 \left(\frac{n+1}{n-1}-\frac{2}{p-1}\right)^2 + \frac{4bc}{(p-1)^2}}}{2\left(\frac{n+1}{n-1}-\frac{3}{p-1}\right)}\, \right)
$$
in Case 4. All of these maximal intervals are the  proper subsets of $(0,c)$. Retrace the proof in above lemma, we claim that $a_1 <v <a_2$ or the maximal intervals above provides the essential boundedness of the solution and is mathematically necessary for the proof. However, it also  reveals that  the logarithmic transform is ill-suited for solutions with the property $ a_1 <v <a_3$, i.e. $ 0< u<c$, since the key inequality $H(w)\ge0$ fails.
\end{remark}

\smallskip
\begin{corollary}\label{corollary2.1}
Suppose $v$ is a solution (as in \autoref{definition}) with the bounds $a_2 <v <a_3$, and
$p \in \left( \frac{2n+2}{n+3},\: 4\right)$,
then there exist two constants $\alpha_0 = \alpha_0(n, p) > \frac{3}{2}$ and $\beta_{n,p,\alpha_0}$ defined in \eqref{constant3} such that the following inequality holds pointwise on $\{x : f(x) > 0\}$:
\begin{align}
\dfrac{f^{2-\alpha_0-\frac{p}{2}}}{2\alpha_0}\mathcal{L}(f^{\alpha_0}) \geq \beta_{n,p,\alpha_0} f^{2} - (n-1)\kappa f - \dfrac{\mathfrak{C}_1}{2} |\nabla f| f^{ \frac{1}{2}},
\end{align}
where $\mathfrak{C}_1 = \left| p - \frac{2(p-1)}{n-1} \right|$, and $f = |\nabla w|^2$ with
\begin{align}
w=-(p-1)\ln \big(a_3-v\big).
\label{transformation-log2}
\end{align}
\end{corollary}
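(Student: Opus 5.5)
The plan is to reduce Corollary \ref{corollary2.1} to Lemma \ref{lem:gradient_estimate_bounded} by a reflection of the dependent variable. I set $\tilde v := -v$. Then $\Delta_p \tilde v = -\Delta_p v$, and
\[
(v-a_1)(v-a_2)(v-a_3) = -(\tilde v + a_1)(\tilde v+a_2)(\tilde v+a_3) = -(\tilde v-\tilde a_1)(\tilde v-\tilde a_2)(\tilde v-\tilde a_3),
\]
with $\tilde a_1 := -a_3 < \tilde a_2 := -a_2 < \tilde a_3 := -a_1$. Hence $\tilde v$ satisfies
\[
\Delta_p \tilde v + (\tilde v-\tilde a_1)(\tilde v-\tilde a_2)(\tilde v-\tilde a_3) = 0,
\]
which is \eqref{GAC} with the new constants. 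The weak formulation of Definition \ref{definition} is preserved: one multiplies the identity by $-1$ and uses that $\psi\mapsto$ the same test functions, together with $|\nabla\tilde v|^{p-2}\nabla\tilde v = -|\nabla v|^{p-2}\nabla v$. The $C^1\cap W^{1,p}_{loc}$ regularity is also unchanged.

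Next I check the range condition. Since $a_2<v<a_3$, we get $-a_3<\tilde v<-a_2$, that is, $\tilde a_1<\tilde v<\tilde a_2$. This is exactly the hypothesis of Lemma \ref{lem:gradient_estimate_bounded}, and the range of $p$ is identical. The transformation \eqref{transformation-log} applied to $\tilde v$ gives
\[
w=-(p-1)\ln(\tilde v-\tilde a_1) = -(p-1)\ln(a_3-v),
\]
which is precisely \eqref{transformation-log2}. So $f=|\nabla w|^2$ coincides, and the lemma yields the stated inequality with the same $\alpha_0$, $\beta_{n,p,\alpha_0}$ and $\mathfrak C_1$.

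The constants $\alpha_0$ and $\beta_{n,p,\alpha_0}$ depend only on $(n,p)$ and not on $b,c$. In the lemma's proof, $b=\tilde a_2-\tilde a_1=a_3-a_2>0$ and $c=\tilde a_3-\tilde a_1=a_3-a_1>b$, so every positivity fact used in Step 3 still applies. Those facts are $-(b+c)u^2+2bcu\ge0$, $bc-u^2>0$, $b+c-2u>0$, and $Q\ge0$ on $(0,b)$.

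The only point needing care, and the main "obstacle", is the sign bookkeeping in the reflection. The cubic is odd under the simultaneous reflection of $v$ and the roots, and the $p$-Laplacian is odd as well, so the two minus signs cancel consistently. Once this is verified, no recomputation of \eqref{lemma3.1.3}–\eqref{lemma3.1.8} is needed.
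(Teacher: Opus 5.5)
Your proof is correct and is essentially the paper's argument. The paper substitutes $\tilde u=a_3-v$ directly into the $u$-equation of Step 1 of Lemma \ref{lem:gradient_estimate_bounded}, and your reflected function satisfies $\tilde v-\tilde a_1=a_3-v=\tilde u$, so both reductions arrive at $\Delta_p\tilde u+\tilde u(\tilde u-b)(\tilde u-c)=0$ with $b=a_3-a_2<c=a_3-a_1$ and $0<\tilde u<b$ (your version has the minor advantage of applying the lemma exactly as stated, weak formulation included, rather than re-entering its proof).
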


\begin{proof}
Define
\[
\tilde{u}(x) := a_3 - v(x), \qquad
\tilde{c} := a_3 - a_2 > 0, \qquad
\tilde{b} := a_2 - a_1 > 0.
\]
Since $a_2 <v <a_3$ and $\nabla\tilde{u} = -\nabla v$, we have \(\tilde{u} = a_3 - v \in (0, \tilde{c})\),  \(\ |\nabla\tilde{u}| = |\nabla v|\) and therefore
\[
\Delta_p\tilde{u} = \operatorname{div}\bigl(|\nabla\tilde{u}|^{p-2}\nabla\tilde{u}\bigr)
= -\operatorname{div}\bigl(|\nabla v|^{p-2}\nabla v\bigr) = -\Delta_p v.
\]
Substituting \(v = a_3 - \tilde{u}\) into the original nonlinearity gives
\begin{align*}
(v - a_1)(v - a_2)(v - a_3)
= (a_3 - \tilde{u} - a_1)(a_3 - \tilde{u} - a_2)(-\tilde{u})
= -\tilde{u}\bigl(\tilde{u} - \tilde{c}\bigr)\bigl(\tilde{u} - (b + \tilde{c})\bigr),
\end{align*}
where we have used \(\tilde{b} + \tilde{c} = a_3 - a_1\).
Thus the original equation \eqref{GAC} deduces to
\begin{equation*}
\Delta_p\tilde{u} + \tilde{u}\bigl(\tilde{u} - \tilde{c}\bigr)\bigl(\tilde{u} - (\tilde{b} + \tilde{c})\bigr) = 0
\end{equation*}
which  is identical in form to the transformed equation in {\textbf{Step 1}} in the last lemma.
Hence, by applying \autoref{lem:gradient_estimate_bounded}, any result proved for solutions lying in the interval \((a_1, a_2)\) applies directly to \(\tilde{u}\) (and consequently to \(v\)) with the appropriate change of notation. The proof is completed.
\end{proof}

\subsection{ The low-bound under the hyperbolic tangent transformation}\label{section2.2}\

In Section \ref{section2.1}, the  translation transformation together with the logarithmic transformation is effective because it converts algebraic-type nonlinearities involving $v$ into exponential ones, which are convex functions of $w$.
Convexity ensures that linear combinations of different exponential terms, after completing squares in the  estimate of $H(w)$, yield a dominant positive term $\beta_{n,p,\alpha_0} f^2$ under suitable conditions.

Unfortunately, as shown before, the standard logarithmic transformation fails when $v$ occurs from $a_1$,  goes through  $a_2$ and then approaches $a_3$.
To recover the method in Section \ref{section2.1}, for any solution $v$ locating  in $(a_1,a_3)$ and  satisfying \eqref{GAC} we set the conventions
\begin{align}\label{transformHT11111}
 m = \frac{a_1 + a_3}{2}, \quad L = \frac{a_3 - a_1}{2} > 0, \quad \delta = \frac{m - a_2}{L} = \frac{a_1 + a_3 - 2a_2}{a_3 - a_1},
\end{align}
and then introduce the hyperbolic tangent transformation:
\begin{align}\label{transformation-tangent}
v = m + L \tanh w,
\end{align}
which  maps $\mathbb{R}$ onto  the finite interval $(a_1,a_3)$.
By the way, the hyperbolic tangent function is the combination of the exponential function, i.e.,
$$\tanh w = \dfrac{e^w -e^{-w}}{e^w +e^{-w}}.$$
This insight will enable us to obtain the Harnack inequality, see the proof of \autoref{theorem4} for more detail.

\begin{lemma}\label{lemma3.5.1}
Under the transformation \eqref{transformation-tangent}, the $p$-Laplacian equation with  cubic polynomial nonlinearity \eqref{GAC} transforms to
\begin{equation}
\Delta_p w - 2(p-1)\tanh w |\nabla w|^p - L^{4-p}(1 - \tanh^2 w)^{2-p}(\delta + \tanh w) = 0.
\label{eq:transformed}
\end{equation}
\end{lemma}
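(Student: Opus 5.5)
The plan is to compute directly with the chain rule. The transformation \eqref{transformation-tangent} is smooth and strictly increasing, and it maps $\mathbb{R}$ onto $(a_1,a_3)$. So for a solution $v$ with $a_1<v<a_3$ the function $w=\operatorname{artanh}\big((v-m)/L\big)$ is well defined. It is $C^1$ wherever $v$ is, and it lies in $W^{1,p}_{loc}$ on compact subsets, since $v$ stays away from $a_1,a_3$ there. Throughout I abbreviate $t=\tanh w$ and use $\frac{d}{dw}\tanh w = 1-t^2$.

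\textbf{Step 1: the gradient and the flux.} First, $\nabla v = L(1-t^2)\nabla w$. Since $1-t^2>0$, this gives
\[
|\nabla v|^{p-2}\nabla v = L^{p-1}(1-t^2)^{p-1}|\nabla w|^{p-2}\nabla w .
\]
Taking the divergence, the product rule and $\nabla(1-t^2) = -2t(1-t^2)\nabla w$ give
\[
\Delta_p v = L^{p-1}(1-t^2)^{p-1}\Big[\Delta_p w - 2(p-1)\,t\,|\nabla w|^p\Big].
\]

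\textbf{Step 2: the nonlinearity.} By the conventions \eqref{transformHT11111} we have $m-a_1=L$, $m-a_3=-L$ and $m-a_2=L\delta$. Hence
\[
v-a_1=L(1+t),\qquad v-a_2=L(\delta+t),\qquad v-a_3=-L(1-t).
\]
Therefore $(v-a_1)(v-a_2)(v-a_3) = -L^3(1-t^2)(\delta+t)$.

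\textbf{Step 3: combine and normalize.} Substituting Steps 1 and 2 into \eqref{GAC} and dividing by the positive factor $L^{p-1}(1-t^2)^{p-1}$ gives
\[
\Delta_p w - 2(p-1)\,t\,|\nabla w|^p - L^{4-p}(1-t^2)^{2-p}(\delta+t)=0,
\]
which is \eqref{eq:transformed}.

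\textbf{Main obstacle: the weak formulation.} The only non-routine point is that $v$ is merely a weak solution in the sense of Definition \ref{definition}, so Step 1 cannot be applied pointwise. To justify it, take $\varphi\in W^{1,p}_0$ with compact support and insert $\psi=\varphi/\big(L^{p-1}(1-t^2)^{p-1}\big)$ into the weak identity. This $\psi$ is admissible: $w$ is locally bounded and $C^1$, so the factor $(1-t^2)^{1-p}$ is locally bounded with a locally bounded gradient. Expanding $\nabla\psi$ by the product rule produces exactly the term $-2(p-1)t|\nabla w|^p\varphi$ from differentiating the weight. The result is that $w$ satisfies \eqref{eq:transformed} in the distributional sense, which is the sense needed for the subsequent Moser iteration. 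Away from this bookkeeping the lemma is a straightforward computation.
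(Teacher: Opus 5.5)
Your proposal is correct and follows the paper's proof essentially step for step. The paper likewise writes $|\nabla v|^{p-2}\nabla v = L^{p-1}\operatorname{sech}^{2p-2}w\,|\nabla w|^{p-2}\nabla w$, takes the divergence, factors the cubic as $-L^3\operatorname{sech}^2 w\,(\delta+\tanh w)$, and divides by $L^{p-1}\operatorname{sech}^{2p-2}w$. Your added step of testing the weak identity with $\psi=\varphi/\bigl(L^{p-1}(1-\tanh^2 w)^{p-1}\bigr)$ is a valid justification that the paper omits, since it only computes formally, and it correctly gives \eqref{eq:transformed} in the weak sense for $C^1$ solutions.
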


\begin{proof}
The mapping $v = m + L \tanh w$ is bijective from $w \in (-\infty, \infty)$ to $a_1 <v <a_3$, with parameters $m$ and $L$ as defined in \eqref{transformHT11111}.
Differentiating the transformation $v$, we obtain
\[
\nabla v = L(1 - \tanh^2 w)\nabla w = L\operatorname{sech}^2 w \nabla w\quad\mbox{and}
\quad |\nabla v|^2 = L^2\operatorname{sech}^4 w |\nabla w|^2.
\]
Recall that $\Delta_p v = \operatorname{div}(|\nabla v|^{p-2}\nabla v)$. We compute
\begin{align*}
|\nabla v|^{p-2}\nabla v
= L^{p-2}\operatorname{sech}^{2(p-2)}w \cdot |\nabla w|^{p-2} \cdot (L\operatorname{sech}^2 w \nabla w)
= L^{p-1}\operatorname{sech}^{2p-2}w |\nabla w|^{p-2}\nabla w,
\end{align*}
and thus, by taking the divergence,
\begin{align*}
\Delta_p v
&= L^{p-1}\Big[\nabla(\operatorname{sech}^{2p-2}w) \cdot (|\nabla w|^{p-2}\nabla w) + \operatorname{sech}^{2p-2}w \operatorname{div}(|\nabla w|^{p-2}\nabla w)\Big]
\\[2mm]
&= L^{p-1}\operatorname{sech}^{2p-2}w\Big[\Delta_p w - 2(p-1)\tanh w |\nabla w|^p\Big],
\end{align*}
where we have used
\[
\nabla(\operatorname{sech}^{2p-2}w) = -2(p-1)\tanh w \operatorname{sech}^{2p-2}w \nabla w.
\]
We turn to computing the cubic polynomial term as
\begin{align*}
v - a_1 = L(1 + \tanh w),\quad
v - a_3 = L(\tanh w - 1), \quad
v - a_2 = L(\delta + \tanh w)
\end{align*}
and hence
\begin{align*}
(v-a_1)(v-a_2)(v-a_3) &= L^3(1+\tanh w)(\delta + \tanh w)(\tanh w - 1)
\\[2mm]
&= -L^3\operatorname{sech}^2 w (\delta + \tanh w),
\end{align*}
where we have used
$$(1+\tanh w)(\tanh w - 1) = \tanh^2 w - 1 = -\operatorname{sech}^2 w.$$

Now, substituting the above results into \eqref{GAC} we get
\[
L^{p-1}\operatorname{sech}^{2p-2}w\Big[\Delta_p w - 2(p-1)\tanh w |\nabla w|^p\Big] - L^3\operatorname{sech}^2 w (\delta + \tanh w) = 0
\]
and, by dividing by $L^{p-1}\operatorname{sech}^{2p-2}w$ (which is strictly positive), we then obtain
\[
\Delta_p w - 2(p-1)\tanh w |\nabla w|^p - \frac{L^{4-p}}{\operatorname{sech}^{2p-4}w}(\delta + \tanh w) = 0.
\]
Since
$$\operatorname{sech}^{4-2p}w = (1 - \tanh^2 w)^{2-p},$$
the desired equation can be verified.
\end{proof}

\begin{lemma}\label{lemma3.5.2}
Suppose $ n\geq 2$, and
\begin{align*}
p\in \left[\dfrac{2n}{n+1}, \quad \frac{2n}{n+1} + \frac{(n-1)(1 +\sqrt{1-\delta^2})}{(n+1)\delta^2} \right]\cap\:(1, \infty).
\end{align*}
Then there exists   $\alpha_0 = \alpha_0(n, p) > \frac{3}{2}$ such that for all $ \alpha \geq \alpha_0$ there holds the following inequality  pointwise on $\{x : f(x) > 0\}$:
\begin{align}\label{lowerbound2}
\frac{f^{2-\alpha-\frac{p}{2}}}{2\alpha} \mathcal{L}(f^\alpha) \geq \mathfrak{C_{n,p}}f^2 -(n-1)\kappa f,
\end{align}
where  $f = |\nabla w|^2$ with $w$ given in \eqref{transformation-tangent}, and
\begin{equation}
\mathfrak{C_{n,p}}:=  \begin{cases}
2(p-1), & \text{if}\,\, p \geq n,\\[2mm]
\dfrac{2(p-1)^2}{n-1}, & \text{if}\,\, 1 < p < n.
\end{cases}
\end{equation}
\end{lemma}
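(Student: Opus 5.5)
The plan is to redo Step 2 of Lemma \ref{lem:gradient_estimate_bounded}, starting from the identity \eqref{lemma3.1.3}, but now feeding in the transformed equation \eqref{eq:transformed} from Lemma \ref{lemma3.5.1}. Write $t=\tanh w$ and
$$G(w)=L^{4-p}(1-t^2)^{2-p}(\delta+t),$$
so that $\Delta_p w=2(p-1)t f^{p/2}+G$.

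In the frame $e_1=\nabla w/|\nabla w|$ this gives
$$(p-1)w_{11}+\sum_{i\ge2}w_{ii}=2(p-1)tf+Gf^{1-\frac p2}.$$
Using $\langle\nabla f,\nabla w\rangle=2fw_{11}$ and $\nabla t=(1-t^2)\nabla w$, we get
$$\langle\nabla\Delta_p w,\nabla w\rangle=2p(p-1)t f^{\frac p2}w_{11}+2(p-1)(1-t^2)f^{\frac p2+1}+G'(w)f.$$
As in \eqref{lemma3.1.5}, we then combine \eqref{3.1}, \eqref{3.4} and \eqref{lemma3.1.3}. This yields
$$\frac{f^{2-\alpha-\frac p2}}{2\alpha}\mathcal L(f^\alpha)\ge(2\alpha-1)(p-1)w_{11}^2+\operatorname{Ric}(\nabla w,\nabla w)+\tfrac1{n-1}\Big(\sum_{i\ge2}w_{ii}\Big)^2+2p(p-1)tfw_{11}+2(p-1)(1-t^2)f^2+G'f^{2-\frac p2}.$$
A useful feature is that the $|\nabla f|f^{1/2}$ error term of Lemma \ref{lem:gradient_estimate_bounded} should not appear here. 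The only first-order term in $w_{11}$ carries the bounded coefficient $t$.

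Next I expand $\big(2(p-1)tf+Gf^{1-p/2}-(p-1)w_{11}\big)^2$. All terms linear in $w_{11}$ have coefficients of size $f$ or $Gf^{1-p/2}$. Completing the square against $(2\alpha-1)(p-1)w_{11}^2$ (with $|t|<1$) costs at most $\frac{C(n,p)}{2\alpha-1}\big(f^2+G^2f^{2-p}\big)$. Hence for $\alpha\ge\alpha_0$ large these losses are absorbed. What remains to control is
$$\Big(\tfrac{4(p-1)^2t^2}{n-1}+2(p-1)(1-t^2)\Big)f^2+\tfrac{1}{n-1}G^2f^{2-p}+\Big(\tfrac{4(p-1)tG}{n-1}+G'\Big)f^{2-\frac p2}.$$

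The core step is the sign of the mixed coefficient. A direct computation gives
$$G'=L^{4-p}(1-t^2)^{2-p}\big[(1-t^2)+2(p-2)t(\delta+t)\big].$$
Therefore
$$\tfrac{4(p-1)tG}{n-1}+G'=L^{4-p}(1-t^2)^{2-p}\,Q(t),\qquad Q(t)=(k-1)t^2+k\delta t+1,$$
where
$$k=\tfrac{4(p-1)}{n-1}+2(p-2)=\tfrac{2((n+1)p-2n)}{n-1}.$$
I need $Q\ge0$ on $[-1,1]$. At the endpoints $Q(\pm1)=k(1\pm\delta)$, which is nonnegative exactly when $p\ge\frac{2n}{n+1}$. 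In the interior it suffices that the discriminant is nonpositive, $k^2\delta^2-4(k-1)\le0$. This holds for $k\le 2(1+\sqrt{1-\delta^2})/\delta^2$, which is precisely the upper bound on $p$ in the hypothesis. Below the lower root of the discriminant, the vertex of $Q$ lies outside $[-1,1]$ or $Q$ is concave, so the endpoint check already suffices; this is a short case split.

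With $Q\ge0$, the $G$ terms are nonnegative and can be dropped. The $f^2$ coefficient is a convex combination in $t^2$ of $4(p-1)^2/(n-1)$ and $2(p-1)$, so it is at least $\min\{2(p-1),\,4(p-1)^2/(n-1)\}$. Keeping half of it to absorb the $O(1/\alpha)$ losses gives $\mathfrak C_{n,p}$ in the stated two regimes. Finally $\operatorname{Ric}\ge-(n-1)\kappa f$ yields \eqref{lowerbound2}.

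I expect the main obstacle to be the careful bookkeeping of the $w_{11}$ cross terms. It must be checked that their square-completion loss is genuinely controlled by $f^2$ and $G^2f^{2-p}$ uniformly in $t$, and does not eat into the $Q(t)$ term.
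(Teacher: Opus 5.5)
Your route is the paper's route: the same expansion of \eqref{lemma3.1.3} using \eqref{eq:transformed}, the same completion of the square in $w_{11}$, and your $Q(t)=(k-1)t^2+k\delta t+1$ is exactly the paper's $\mathfrak{g}$ in \eqref{function-g}. Your discriminant-plus-endpoint analysis of $Q$ is also correct: for $1<k<k_-=2/(1+\sqrt{1-\delta^2})$ one has $k\le 2/(2-|\delta|)$, so the vertex lies outside $(-1,1)$.

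The step that fails is the last one, the constant. Keeping half of $\min\{2(p-1),\,4(p-1)^2/(n-1)\}$ yields $\min\{p-1,\,2(p-1)^2/(n-1)\}$. This equals $\mathfrak{C}_{n,p}$ only when $p\le\frac{n+1}{2}$:
\begin{itemize}
\item for $\frac{n+1}{2}<p<n$ you get $p-1<\frac{2(p-1)^2}{n-1}$;
\item for $p\ge n$ you get $p-1$ instead of $2(p-1)$.
\end{itemize}
Both regimes lie in the admissible range. For example, $n=2$ and $p\in(\frac{3}{2},\frac{5}{3}]$ is admissible for every $\delta$, and $p\ge n$ is admissible once $|\delta|$ is small. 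Replacing ``half'' by $1-\varepsilon$ rescues $1<p<n$, but not $p\ge n$. There $2(p-1)$ is exactly the value of your $f^2$-coefficient at $t=0$, so no positive loss that is uniform in $t$ can be tolerated.

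The structure you discarded by invoking $|t|<1$ is what is needed. The $f$-part of the coefficient of $w_{11}$ is $2(p-1)t\bigl(p-\frac{2(p-1)}{n-1}\bigr)f$, which is proportional to $t$. So after Young's inequality the loss is at most $\frac{C(n,p)}{2\alpha-1}\bigl(t^2f^2+G^2f^{2-p}\bigr)$. For $\alpha$ large, absorb the $t^2f^2$ part into half of $\frac{4(p-1)^2t^2}{n-1}f^2$ only, and the $G^2f^{2-p}$ part into half of $\frac{1}{n-1}G^2f^{2-p}$. Keep the whole of $2(p-1)(1-t^2)f^2$. What remains is
\[
\Bigl(2(p-1)+\frac{2(p-1)(p-n)}{n-1}\,t^2\Bigr)f^2 .
\]
Its infimum over $t^2\in[0,1]$ is exactly $\mathfrak{C}_{n,p}$: it is attained at $t=0$ if $p\ge n$, and approached as $t^2\to 1$ if $p<n$. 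This is Step 2 of the paper's proof, and with this correction your argument gives the lemma as stated. Your closing worry pointed at the right place, but the danger runs the other way: bounding the loss uniformly in $t$ is precisely what costs the constant.
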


\begin{proof}
Recall that the expression of the operator $\mathcal{L}(f^\alpha)$ has been given in \eqref{lemma3.1.3} with $w$ given in \eqref{transformation-tangent}. Its estimate will be carried out in four steps.
\medskip

\noindent\textbf{Step 1. The rough lower-bound.}\\
Let
\begin{align*}
B(w):= L^{4-p}(1-\tanh^2 w)^{2-p}(\delta + \tanh w).
\end{align*}
Differentiating both sides of the equation \eqref{eq:transformed} and taking inner product with $\nabla w$, we arrive at the following
\begin{align}\label{estiamte67}
\begin{aligned}
\langle \nabla \Delta_p w, \nabla w \rangle
&= 2(p-1)\left[ (\nabla \tanh w \cdot \nabla w) f^{p/2} + \tanh w \cdot \frac{p}{2} f^{p/2 - 1} \langle \nabla f, \nabla w \rangle \right] + B'(w) f
\\[2mm]
&= 2(p-1) \operatorname{sech}^2 w f^{1 + p/2} + p(p-1) \tanh w f^{p/2 - 1}\langle \nabla f, \nabla w \rangle + B'(w) f,
\end{aligned}
\end{align}
since $$\nabla \tanh w = (1 - \tanh^2 w) \nabla w = \operatorname{sech}^2 w \nabla w.$$

Choosing an orthonormal frame $\{e_i\}$ such that $e_1 = \frac{\nabla w}{|\nabla w|}$
and substituting  \eqref{3.1}-\eqref{3.4} and \eqref{estiamte67} into \eqref{lemma3.1.3}, we obtain a lower-bound estimate
\begin{align*}
\frac{f^{2-\alpha-\frac{p}{2}}}{2\alpha} \mathcal{L}(f^\alpha)
& \geq
(2\alpha -1) (p-1) w_{11}^2
+ \frac{1}{n-1} \left( \sum_{i=2}^n w_{ii} \right)^2
+ \operatorname{Ric}(\nabla w, \nabla w)
 \\[2mm]
&\quad + f^{1-\frac{p}{2}} \left[ 2(p-1) \operatorname{sech}^2 w f^{1 + p/2} + p(p-1) \tanh w f^{p/2 - 1} \langle \nabla f, \nabla w \rangle + B'(w) f \right]
\\[2mm]
&= (2\alpha -1) (p-1) w_{11}^2
+ \frac{1}{n-1} \left( \sum_{i=2}^n w_{ii} \right)^2
+ \operatorname{Ric}(\nabla w, \nabla w)
 \\[2mm]
 &\quad
 + 2(p-1) \operatorname{sech}^2 w f^2
 + 2p(p-1) w_{11}\tanh w f
 + B'(w) f^{2-\frac{p}{2}}
.
\end{align*}
The $p$-Laplacian can be expressed as
\[
\Delta_p w = f^{p/2 - 1} \left[ (p-1) w_{11} + \sum_{i=2}^n w_{ii} \right].
\]
From equation \eqref{eq:transformed}, we obtain
\[
(p-1) w_{11} + \sum_{i=2}^n w_{ii} = 2(p-1) \tanh w f + B(w) f^{1 - p/2},
\]
and then,
\begin{align*}
\left( \sum_{i=2}^n w_{ii} \right)^2 = & \left[ 2(p-1)\tanh w \cdot f + B(w) f^{1-p/2} - (p-1)w_{11} \right]^2 \\[2mm]
= & 4(p-1)^2\tanh^2 w \cdot f^2 + B(w)^2 f^{2-p} + (p-1)^2 w_{11}^2 \\[2mm]
& + 4(p-1)\tanh w \cdot B(w) f^{2-p/2} - 4(p-1)^2\tanh w \cdot f w_{11}\\
& - 2(p-1) B(w) f^{1-p/2} w_{11}.
\end{align*}
Thus,
\begin{align}\label{eq:lowerbound}
    \begin{split}
\frac{f^{2-\alpha-\frac{p}{2}}}{2\alpha} \mathcal{L}(f^\alpha)
&\geq
\underbrace{(p-1)
\left[
\left( 2\alpha -1  \right) w_{11}^2
- \frac{2B(w)}{n-1} f^{1-p/2} w_{11}
+ 2\tanh w \left( p - \frac{2(p-1)}{n-1} \right) f w_{11}
\right]
}_{:=  \mathcal{T}_1}
 \\[2mm]
&\quad +
 \underbrace{
 \left[ \frac{4(p-1)^2\tanh^2 w}{n-1} + 2(p-1)\operatorname{sech}^2 w \right] f^2
+ \frac{B(w)^2}{n-1} f^{2-p}
}_{:= \mathcal{T}_2}
\\[2mm]
&\quad +
\underbrace{
\left[ \frac{4(p-1)\tanh w \cdot B(w)}{n-1} + B'(w) \right] f^{2-p/2}
 }_{:= \mathcal{T}_3}
\ +\  \operatorname{Ric}(\nabla w, \nabla w).
\end{split}
\end{align}

\medskip
\noindent\textbf{Step 2. Analysis of the nonnegativity of $\mathcal{T}_1 + \mathcal{T}_2$.}\\
 Regarding $ \mathcal{T}_1$ as a  quadratic term of $w_{11}$ and choosing $\alpha > \dfrac{3}{2}$, we observe that the minimum of $\mathcal{T}_1$ is given by
\begin{equation} \label{eq:quadraticbound}
\mathcal{T}_1 \geq
 -\dfrac{p-1}{4\mathcal{A}}\left(\mathcal{B}^2 f^2 + 2\mathcal{B}\mathcal{C} f^{2-p/2} + \mathcal{C}^2 f^{2-p}\right)
 \geq
 -\dfrac{p-1}{4\mathcal{A}}\left(2\mathcal{B}^2 f^2 +  2\mathcal{C}^2 f^{2-p}\right)
,
\end{equation}
where the Young's inequality has been used and the notation is denoted as follows
\[
\mathcal{A} = 2\alpha - 1 > 0, \quad
\mathcal{B}= 2\tanh w\left[p - \frac{2(p-1)}{n-1}\right],
\quad \mathcal{C} = -\frac{2B(w)}{n-1}.
\]
Combining \eqref{eq:quadraticbound} with the term $ \mathcal{T}_2$, we choose $\alpha$ large enough and thus obtain
\begin{align*}
    \mathcal{T}_1 + \mathcal{T}_2 &\geq
     \left[
         \frac{4(p-1)^2\tanh^2 w}{n-1}
         + 2(p-1)\operatorname{sech}^2 w
         - \dfrac{(p-1)\mathcal{B}^2}{2\mathcal{A}}
     \right] f^2
     +
     \left[\frac{B(w)^2}{n-1}
     - \dfrac{(p-1)\mathcal{C}^2}{2\mathcal{A}}\right]f^{2-p}
     \\[2mm]
&\geq
     \left[
         \frac{2(p-1)^2\tanh^2 w}{n-1}
         + 2(p-1)\operatorname{sech}^2 w
     \right] f^2
     +
      \frac{B(w)^2}{2(n-1)}  f^{2-p}
    \\[2mm]
    &\geq
\left[ 2(p-1) + 2(p-1)  \frac{p - n}{n-1} \tanh^2 w\right]f^2
\\[2mm]
&\geq
\begin{cases}
     2(p-1)f^2, \quad & { \rm if} \: p\geq n;
    \\[2mm]
    \dfrac{2(p-1)^2}{n-1} f^2, \quad & { \rm if} \: p<n.
\end{cases}
\end{align*}

\noindent\textbf{Step 3. Analysis of the nonnegativity of $\mathcal{T}_3$.}\\
Recall
$$
\delta  = \dfrac{a_1 + a_3 - 2a_2}{a_3 - a_1}\in (-1,1),
$$
and
$$
B(w) = L^{4-p}(1 - \tanh^2 w)^{2-p}(\delta + \tanh w).
$$
Thus
\begin{align*}
B'(w):= \dfrac{{\rm d}B(w)}{{\rm d}w}
&=L^{4-p}(1 - \tanh^2 w)^{2-p} \left[ (1 - \tanh^2 w) - 2(2-p)\tanh w (\delta + \tanh w) \right]\\[2mm]
&=L^{4-p}(1 - \tanh^2 w)^{2-p} [ (2p-5)\tanh^2 w - 2(2-p)\delta \tanh w +1],
\end{align*}
and
\begin{align*}
\mathcal{T}_3 = L^{4-p}(1- \tanh^2 w)^{2-p}\left[ \frac{2p(n+1)-5n+1}{n-1}\tanh^2 w + \frac{2p(n+1)-4n}{n-1} \delta \tanh w
+ 1 \right]f^{2-p/2},
\end{align*}
where $ L>0$ and
$$1- \tanh^2 w=\operatorname{sech}^2 w \in (0,1].$$

Define
\begin{align}
\mathfrak{g}( z ) =   \frac{2p(n+1)-5n+1}{n-1}z^2 + \frac{2p(n+1)-4n}{n-1} \delta z + 1, \quad \forall\, z\in {\mathbb R}.
\label{function-g}
\end{align}
We will analyze the infimum of $\mathfrak{g}( \tanh w )$ with $ \tanh w  \in (-1,1)$ and then confirm $\mathfrak{g}( \tanh w ) \geq 0 $ so that $\mathcal{T}_3\geq 0$.
This can be done in the following way.

\smallskip\noindent
\textbf{Case 1:} Arguments on the case {$2p(n+1) - 5n + 1 = 0$, i.e., $p = \frac{5n-1}{2(n+1)}$.} \\
Since $ \delta\in (-1,1)$ and $\tanh w\in (-1,1)$, there holds
\[
\mathfrak{g}( \tanh w)=\delta  \tanh w + 1>0.
\]
\smallskip

\noindent\textbf{Case 2:} Arguments on the case $2p(n+1) - 5n + 1> 0$, i.e., $p > \frac{5n-1}{2(n+1)}$. \\
The function $\mathfrak{g}( z)$ on ${\mathbb R}$ is a convex quadratic function  with a minimum at
\[
z_0
= -\frac{\delta[2p(n+1)-4n]}{2[2p(n+1) - 5n + 1]}.
\]
The infimum value of $\mathfrak{g}(\tanh w)$ with $\tanh w\in (-1,1)$ is either at $z_0$ (if $z_0 \in (-1,1)$) or at one of the endpoints.
\smallskip

\noindent $\clubsuit$\
If $\delta=0$, then $ z_0 =0$. Hence, $\mathfrak{g}( \tanh w) \geq 1$ due to \eqref{function-g}.

\smallskip\noindent
$\clubsuit$\
We assume $\delta \neq 0$ and, further,
$$ p \leq  \dfrac{2n}{n+1} + \dfrac{n-1}{(n+1)(2 - |\delta|)}.$$
This implies  $z_0 \in \mathbb{R}\backslash(-1,1)$.
If $z_0 \le -1$,  the infimum on $(-1,1)$ occurs at $t = -1$:
\[
\mathfrak{g}(-1) = \frac{2p(n+1) - 5n + 1}{n-1} - \frac{[2p(n+1)-4n]\delta}{n-1} + 1
=
\frac{[2p(n+1)-4n]}{n-1}(1-\delta)>0.
\]
If $z_0 \ge 1$, the infimum on $(-1, 1)$ occurs at $t = 1$:
\[
\mathfrak{g}(1) = \frac{2p(n+1) - 5n + 1}{n-1} + \frac{[2p(n+1)-4n]\delta}{n-1} + 1
=\frac{[2p(n+1)-4n]}{n-1}(1+\delta)>0.
\]
Summing up the permissible range of $p$ above, we obtain
\begin{align*}
p \in\left(
\dfrac{5n-1}{2(n+1)},\quad \dfrac{2n}{n+1} + \dfrac{n-1}{(n+1)(2 - |\delta|)}
\right]
\end{align*}
where the assumption $\delta \neq 0$ makes sure the upper-bound of the set belongs to $\left( \frac{5n-1}{2(n+1)},\: \frac{3n-1}{n+1}\right)$ and thus the range is nonempty.
\smallskip

\noindent $\clubsuit$\
We assume $\delta \neq 0$ and
$$ p > \dfrac{2n}{n+1} + \dfrac{n-1}{(n+1)(2 - |\delta|)}$$
which implies  $z_0 \in (-1,1)$, we observe
\[
 \mathfrak{g}(z_0)  = 1 - \frac{ \delta^2\left( \frac{2p(n+1)-4n}{n-1} \right)^2}{4\left( \frac{2p(n+1) - 5n + 1}{n-1} \right)}.
\]
We require the additional condition to confirm  $ \mathfrak{g}(z_0) \geq 0$, that is
\begin{align*}
\frac{2n}{n+1} + \frac{(n-1)(1 -\sqrt{1-\delta^2})}{(n+1)\delta^2}\le p \le\frac{2n}{n+1} + \frac{(n-1)(1 +\sqrt{1-\delta^2})}{(n+1)\delta^2}.
\end{align*}
A direct calculation shows, for any $\delta \in (-1,1)\backslash\{0\}$ there holds
\begin{align*}
\dfrac{5n-1}{2(n+1)} <& \frac{2n}{n+1} + \frac{(n-1)(1 -\sqrt{1-\delta^2})}{(n+1)\delta^2}\\
<& \dfrac{2n}{n+1} + \dfrac{n-1}{(n+1)(2 - |\delta|)}\\
<& \frac{2n}{n+1} + \frac{(n-1)(1 +\sqrt{1-\delta^2})}{(n+1)\delta^2}.
\end{align*}
We thus summarize the range of $p$ referring to  $ \delta \in (-1,1)$ as follows:
\begin{align*}
p\in \left(\dfrac{5n-1}{2(n+1)}, \quad \dfrac{2n}{n+1} + \frac{(n-1)\big(1 +\sqrt{1-\delta^2}\,\big)}{(n+1)\delta^2} \right].
\end{align*}
\smallskip

\noindent\textbf{Case 3:} Arguments on the case $2p(n+1) - 5n + 1 < 0$, i.e., $p < \frac{5n-1}{2(n+1)}$. \\
The infimum of $\mathfrak{g}(\tanh w)$ with $\tanh w\in (-1,1)$ must occur at one of the endpoints:
\[
\mathfrak{g}(-1)  = \frac{2p(n+1) - 5n + 1}{n-1} - \frac{[2p(n+1)-4n]\delta}{n-1} + 1,
\]

\[
\mathfrak{g}(1)  = \frac{2p(n+1) - 5n + 1}{n-1} + \frac{[2p(n+1)-4n]\delta}{n-1} + 1.
\]
Since $ \delta \in (-1,1)$, it yields
\begin{align*}
\mathfrak{g}(-1)\geq 0 \Leftrightarrow(2p(n+1)  - 4n)(1 + \delta) \ge 0 \quad \Leftrightarrow\quad p \ge \frac{2n}{n+1},\\[2mm]
\mathfrak{g}(1)\geq 0 \Leftrightarrow (2p(n+1)- 4n)(1 -\delta) \ge 0 \quad \Leftrightarrow\quad p \ge \dfrac{2n}{n+1}.
\end{align*}
We have  both $\mathfrak{g}(-1) \ge 0$ and $\mathfrak{g}(1) \ge 0$ for
\begin{align*}
p\in \left[\dfrac{2n}{n+1},\: \dfrac{5n-1}{2(n+1)}\right)\neq \varnothing
\end{align*}
since $\dfrac{5n-1}{2(n+1)} -\dfrac{2n}{n+1} = \dfrac{n-1}{2(n+1)} > 0$ for $n\ge2$.

Up to now, we have finished the analysis of all the situations.
\medskip

\noindent
\textbf{Step 4. The permissible range of $p$.}\\
Taking account to the range of the $p$-Laplacian, i.e., $ p \in (1, \infty)$,
we claim that the permissible range of $p $ for the nonnegativity of $\mathcal{T}_3$ is
\begin{align*}
p\in \left[\dfrac{2n}{n+1}, \quad \frac{2n}{n+1} + \frac{(n-1)(1 +\sqrt{1-\delta^2})}{(n+1)\delta^2}\right]\cap\:(1, \infty)\quad {\rm for }\quad  \delta \in (-1, 1).
\end{align*}

Combining all these cases and replacing $   \operatorname{Ric}(\nabla w, \nabla w)$ by its lower bound $-(n-1)\kappa f$ in \eqref{eq:lowerbound}, we claim that
\begin{align*}
    \frac{f^{2-\alpha-\frac{p}{2}}}{2\alpha} \mathcal{L}(f^\alpha)
    \geq
    -(n-1)\kappa f+
     \begin{cases}
        2(p-1)f^2, & \text{if } p \geq n,
        \\[2mm]
        \dfrac{2(p-1)^2}{n-1}f^2, & \text{if } 1 < p < n.
      \end{cases}
\end{align*}
The proof is completed.
\end{proof}

\section{Proofs of  main theorems}
Recall that: if $v$ is a solution to \eqref{GAC}, then $f=|\nabla w|^2$ and $w$ is given by \eqref{transformation-log}, or \eqref{transformation-log2}, or \eqref{transformation-tangent}.
For the convenience of notation, we may use these conventions without any further notification throughout this section.

\subsection{ The integral inequality  to  $f=|\nabla w|^2$}\label{sec2.2}\

In the following, a vital integral estimate to $f=|\nabla w|^2$ will be proved, which will play an essential role in the Moser iteration.
Comparing the two pointwise lower bounds \eqref{lowerbound1} and \eqref{lowerbound2},
we will begin with the former and only present the detailed proof of the corresponding integral estimate, because it is a little bit more complicated.

For the purpose of doing that, we first recall the Saloff-Coste's Sobolev inequality.
\begin{lemma}[\cite{MR1180389}]\label{SC-Sobolev}
Let $(M^{n},g)$ be a complete Riemannian manifold with $\mathrm{Ric}_{g}\geq-(n-1)\kappa$ where $\kappa$ is a nonnegative constant.
For $n\geq 2$, there exists a positive constant $C_{n}$ depending only on $n$, such that for all ball $B\subset M$ of radius $R$ and volume $V$ we have
\[
\forall\, f\in C^{\infty}_{0}(B),\quad \|f\|^{2}_{L^{\frac{2n}{n-2}}(B)}\leq e^{C_{n}\big(1+\sqrt{\kappa}R\big)}V^{-\frac{2}{n}}R^{2}\left(\int\limits_{B}|\nabla f|^{2}+R^{-2}f^{2}\right).
\]
For $n=2$, the above inequality holds if we replace $n$ by some fixed $n^{\prime}>2$.
\qed
\end{lemma}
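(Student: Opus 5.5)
This is Saloff-Coste's theorem, cited from \cite{MR1180389}, so the paper itself needs no new argument. If I were to reprove it, I would follow the standard route: two local geometric facts, the volume doubling property and a scale-invariant Poincar\'e inequality on balls, together imply a local Sobolev inequality, with constants that depend only on the doubling and Poincar\'e constants. The curvature hypothesis $\mathrm{Ric}_g\geq -(n-1)\kappa$ enters only through these two facts, at scale $R$, and that is where the factor $e^{C_n(1+\sqrt{\kappa}R)}$ comes from.

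\textbf{Step 1 (doubling).} By Bishop--Gromov volume comparison, for every ball $B_r(x)\subset B$ with $r\le R$ and every $0<s<r$,
\[
\frac{V(B_r(x))}{V(B_s(x))}\le \frac{\int_0^{r}\sinh^{n-1}(\sqrt{\kappa}t)\,dt}{\int_0^{s}\sinh^{n-1}(\sqrt{\kappa}t)\,dt}\le \Big(\frac{r}{s}\Big)^{n}e^{(n-1)\sqrt{\kappa}r}.
\]
In particular, $V(B_s(x))\ge e^{-C_n(1+\sqrt\kappa R)}(s/R)^n V$ for all $x\in B$ and $s\le R$; if $x$ is not the center of $B$, apply the comparison to a ball centered at $x$ of radius $2R$, which contains $B$.

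\textbf{Step 2 (Poincar\'e).} Buser's inequality gives, for balls of radius $r\le R$,
\[
\int_{B_r}|\varphi-\varphi_{B_r}|^2\le e^{C_n(1+\sqrt{\kappa}r)}\,r^2\int_{B_{r}}|\nabla\varphi|^2 .
\]
Alternatively, one can use the weak form with $B_{2r}$ on the right; it suffices here, and it follows from the segment inequality of Cheeger--Colding, which requires only the Ricci lower bound.

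\textbf{Step 3 (doubling + Poincar\'e $\Rightarrow$ Sobolev).} I would first prove a weak-type (or $L^1$-type) Sobolev inequality for $f\in C_0^\infty(B)$. Write $f(x)$ as a telescoping sum of averages $f_{B_{2^{-k}R}(x)}$. Control consecutive differences by the Poincar\'e inequality, and control the coarsest average $f_{B_R(x)}$ by $V^{-1/2}\|f\|_{L^2}$. This yields a pointwise bound of $|f|$ by a fractional maximal function of $|\nabla f|$ plus $R^{-1}V^{-1/2}\|f\|_2$. The lower volume bound from Step 1 converts that maximal function into a weak $L^{2n/(n-2)}$ estimate. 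The last move is to upgrade weak type to strong type by the Maz'ya truncation trick: apply the weak estimate to the truncations $\min\{(|f|-2^k)_+,2^k\}$ and sum over $k$. Squaring gives the stated form
\[
\|f\|_{L^{2n/(n-2)}(B)}^2\le e^{C_n(1+\sqrt{\kappa}R)}V^{-2/n}R^2\int_B\big(|\nabla f|^2+R^{-2}f^2\big).
\]
For $n=2$, the doubling exponent $n$ may be replaced by any $n'>2$, since $(r/s)^2\le (r/s)^{n'}$ for $s\le r$; running the same argument with $n'$ gives the final remark of the statement.

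The main obstacle is Step 3. The passage from weak-type to strong-type estimates, and keeping every constant of exponential form in $\sqrt{\kappa}R$, requires care in tracking how the doubling constant is raised to powers along the chain argument. Since all the factors are of the form $e^{C(1+\sqrt\kappa R)}$ and only finitely many of them are multiplied, I expect this bookkeeping to close without new ideas.
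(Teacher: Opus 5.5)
Your proposal is correct and consistent with the paper, which gives no argument of its own and simply imports the inequality from Saloff-Coste. Your outline is the standard proof of that cited theorem: Bishop--Gromov doubling and Buser's Poincar\'e inequality at scale $R$, each with constant $e^{C_n(1+\sqrt{\kappa}R)}$, fed into the doubling-plus-Poincar\'e-implies-Sobolev machinery, with $n'>2$ in place of $n$ when $n=2$. Two small fixes in Step 3: the coarse-average term is $V^{-1/2}\|f\|_{L^2}$ rather than $R^{-1}V^{-1/2}\|f\|_{L^2}$ (this is what yields the $V^{-2/n}\int_B f^2$ term), and the telescoping sum is not pointwise dominated by a fractional maximal function, so obtain the weak $L^{2n/(n-2)}$ bound by Hedberg's splitting of the sum at a scale $\rho$ (Hardy--Littlewood maximal function of $|\nabla f|^2$ for radii below $\rho$, the lower volume bound of Step 1 for radii above $\rho$) before applying the truncation.
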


\begin{lemma}\label{lemma2}
    Suppose that all assumptions in   \autoref{theorem1} are true, and $\Omega = B_R (o) \subset M$ is a geodesic ball, then it holds the integral inequality
\begin{align*}
        &\quad
       \beta_{n,p,\alpha_0} \int_\Omega f^{\alpha_0  + \frac{p}{2} +t} \eta^2
        \ +\ \frac{\mathfrak{C}_3}{t} e^{-t_0} V^{\frac{2}{n}} R^{-2}
            \left\|
                f^{\frac{\alpha_0 +t -1 }{2}+ \frac{p}{4}} \eta
            \right\|_{L^\frac{2n}{n-2}}^2
            \\[2mm]
            &\leq
            \mathfrak{C}_5 t_0^2 R^{-2} \int_\Omega f^{\alpha_0 +t+\frac{p}{2}-1} \eta^2
            \ +\ \frac{\mathfrak{C}_4}{t}  \int_\Omega f^{\alpha_0 +t+\frac{p}{2}-1} |\nabla \eta|^2,
    \end{align*}
    where $\beta_{n,p,\alpha_0}$ and $\alpha_0 $ have been chosen in  \eqref{constant3} and the constants $\mathfrak{C}_1, \mathfrak{C}_2, \cdots, \mathfrak{C}_{11}$ depend on $ n,p$.
\end{lemma}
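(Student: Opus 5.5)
The plan is the standard Wang--Zhang / He--Wang--Wei integral step: test the weak form of the pointwise inequality \eqref{lowerbound1} against $f^{t}\eta^2$, absorb the gradient cross terms, and convert the resulting Dirichlet term into a Sobolev term via \autoref{SC-Sobolev}.

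\textbf{Step 1: weak form.} Fix $\eta\in C_0^\infty(\Omega)$ and $t>0$, and take $\psi=f^{t}\eta^2$ as test function (if needed, first $f_\epsilon=(f-\epsilon)^+$, then let $\epsilon\to0$). Since $w$ solves a nondegenerate equation where $f>0$, standard regularity gives $f\in W^{1,2}_{loc}$ on $\{f>0\}$ with $f^{p/2-1}\nabla f$ locally integrable. Multiplying \eqref{lowerbound1} by $2\alpha_0 f^{\alpha_0+p/2-2}\psi$ and integrating by parts in $\mathcal{L}(f^{\alpha_0})=\mathrm{div}(f^{p/2-1}\mathcal{A}(\nabla f^{\alpha_0}))$ yields
\begin{align*}
&-\int_\Omega f^{p/2-1}\langle \mathcal{A}(\nabla f^{\alpha_0}),\nabla(f^t\eta^2)\rangle\\
&\geq 2\alpha_0\beta_{n,p,\alpha_0}\int_\Omega f^{\alpha_0+p/2+t}\eta^2-2\alpha_0(n-1)\kappa\int_\Omega f^{\alpha_0+p/2+t-1}\eta^2-\alpha_0\mathfrak{C}_1\int_\Omega |\nabla f|f^{\alpha_0+t+\frac{p-3}{2}}\eta^2.
\end{align*}
Expand the left-hand side using $\langle\mathcal{A}(\nabla f),\nabla f\rangle\ge \min\{1,p-1\}|\nabla f|^2$ and $|\mathcal{A}(\nabla f)|\le \max\{1,p-1\}|\nabla f|$. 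This produces a good term $-\alpha_0 t\,a_p\int f^{\alpha_0+t+p/2-3}|\nabla f|^2\eta^2$ on the left and a cross term $2\alpha_0\int f^{\alpha_0+t+p/2-2}|\nabla f|\,|\nabla\eta|\,\eta$.

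\textbf{Step 2: absorb the cross terms.} Apply Young's inequality to both cross terms, the one from Step 1 and the $\mathfrak{C}_1$ term. The $\mathfrak{C}_1$ term splits as $\epsilon t\int f^{\alpha_0+t+p/2-3}|\nabla f|^2\eta^2+\frac{C}{t}\int f^{\alpha_0+t+p/2}\eta^2$. For $t\ge t_0$ large, the second piece is at most half of $\beta\int f^{\alpha_0+p/2+t}\eta^2$. This is exactly why the statement has $t\geq t_0$ with $t_0\sim 1+\sqrt\kappa R$. The other cross term gives $\frac{C}{t}\int f^{\alpha_0+t+p/2-1}|\nabla\eta|^2$. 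After absorption we obtain
\[
\beta\int f^{\alpha_0+p/2+t}\eta^2+t\int f^{\alpha_0+t+p/2-3}|\nabla f|^2\eta^2\lesssim \kappa\int f^{\alpha_0+t+p/2-1}\eta^2+\frac1t\int f^{\alpha_0+t+p/2-1}|\nabla\eta|^2 .
\]

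\textbf{Step 3: Sobolev.} Note that $|\nabla(f^{\frac{\alpha_0+t-1}{2}+\frac p4}\eta)|^2\lesssim t^2 f^{\alpha_0+t+p/2-3}|\nabla f|^2\eta^2+f^{\alpha_0+t+p/2-1}|\nabla\eta|^2$. Dividing the gradient term by $t$ and applying \autoref{SC-Sobolev} gives
\[
\frac{1}{t}e^{-C_n(1+\sqrt\kappa R)}V^{2/n}R^{-2}\|f^{\frac{\alpha_0+t-1}{2}+\frac p4}\eta\|^2_{L^{\frac{2n}{n-2}}}
\]
on the left. Choose $t_0=C(n,p)(1+\sqrt\kappa R)$ so that $e^{-C_n(1+\sqrt\kappa R)}\ge e^{-t_0}$. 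Moving the extra $R^{-2}\int f^{\ldots}\eta^2/t$ term to the right, and bounding $\kappa\le t_0^2R^{-2}$, gives the $\mathfrak{C}_5t_0^2R^{-2}$ term.

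\textbf{Main obstacle.} The expected obstacle is the bookkeeping in Step 2: the $\mathfrak{C}_1|\nabla f|f^{1/2}$ term must be absorbed using only the $t$-weighted gradient term together with $\beta f^2$, uniformly in $t$. This works only because the coefficient of the good gradient term grows linearly in $t$. The regularity justification in Step 1 is routine but requires care on the degenerate set $\{f=0\}$.
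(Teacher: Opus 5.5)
Your proposal is correct and follows essentially the same route as the paper: test $\mathcal{L}(f^{\alpha_0})$ against $f_\varepsilon^{t}\eta^2$, use the ellipticity bound $\min\{1,p-1\}$, absorb the $\mathfrak{C}_1$ and $\nabla\eta$ cross terms by Cauchy's inequality, and then apply Saloff-Coste's inequality with $t_0=c_1(1+\sqrt{\kappa}R)$. One small correction to your commentary: absorbing the $\mathfrak{C}_1$ term only needs $t\ge \mathfrak{C}_1^2/(\mathfrak{C}_2\beta_{n,p,\alpha_0})$, a constant depending on $n,p$, whereas the factor $1+\sqrt{\kappa}R$ in $t_0$ comes from the Sobolev constant $e^{C_n(1+\sqrt{\kappa}R)}$ and the $\kappa$-term, exactly as you use it in Step 3 (and for $n=2$ the paper replaces $n$ by $n'=4$ in Saloff-Coste's inequality).
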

\begin{proof}
We choose a test function $\psi = (f_\varepsilon)^t\, \eta^2$, where $\eta \in C_0^\infty (\Omega, \mathbb{R})$ is a non-negative cut-off function,
$f_\varepsilon = (f-\varepsilon)^+ $  with $\varepsilon >0$, the constant $t>1 $ is to be determined later.
Then, multiplying  \eqref{lowerbound1} by the test function and integrating over $\Omega$, we obtain
\begin{align*}
        &\quad 2\beta_{n,p,\alpha_0} \alpha_0 \int_\Omega f^{\alpha_0 + \frac{p}{2}}
         f_\varepsilon^{t} \eta^2
         -    2\alpha_0 (n-1) \kappa \int_{\Omega}
 f^{\alpha_0 + \frac{p}{2}-1} f_\varepsilon^{t} \eta^2
       -
    \mathfrak{C}_1 \alpha_0 \int_{\Omega}f^{\alpha_0 + \frac{p}{2}-\frac{3}{2}} |\nabla f|^2 f_\varepsilon^{t} \eta^2 \notag
    \\[2mm] \notag
     &\leq
        -\int_\Omega
        \left\langle
                f^{\frac{p}{2}-1} \nabla f^{\alpha_0}
                + (p-2) f^{\frac{p}{2}-2}
                    \left\langle
                        \nabla f^{\alpha_0}, \nabla u
                    \right\rangle\nabla u ,\ \nabla \psi
        \right\rangle
        \\[2mm]\notag
        &=
        -\int_\Omega t \alpha_0  f^{\alpha_0 + \frac{p}{2} -2} f_\varepsilon^{t-1} |\nabla f|^2 \eta^2
        + (p-2) t \alpha_0 f^{\alpha_0 + \frac{p}{2} -3} f_\varepsilon^{t-1} \langle \nabla f, \nabla u\rangle^2 \eta^2
        \\[2mm]
        &\quad
        - \int_\Omega  2\eta \alpha_0 f^{\alpha_0 + \frac{p}{2} -2} f_\varepsilon^{t}
            \langle \nabla f, \nabla \eta \rangle
        +  2\eta \alpha_0   (p-2) f^{\alpha_0 + \frac{p}{2} -3 } f_\varepsilon^{t}
            \langle \nabla f, \nabla u\rangle \langle \nabla u, \nabla \eta \rangle.
     \end{align*}
Using the inequalities that
\begin{align*}
    f^{t-1}_\varepsilon |\nabla f|^2 + (p-2) f_\varepsilon^{t-1} f^{-1} \langle \nabla f, \nabla u\rangle^2 \geq  \mathfrak{C}_2 f_\varepsilon^{t-1} |\nabla f|^2
\end{align*}
where  $ \mathfrak{C}_2=\mathrm{min}\{ 1,p-1\}>\frac{p-1}{p}$, and
\begin{align*}
     f_\varepsilon^{t}
           \langle \nabla f, \nabla \eta \rangle
           + (p-2)  f_\varepsilon^{t}  f^{-1}
            \langle \nabla f, \nabla u\rangle \langle \nabla u, \nabla \eta \rangle
            \geq
            -(p+1)  f_\varepsilon^{t} | \nabla f | | \nabla \eta |,
\end{align*}
and then letting $\varepsilon \rightarrow 0$, we obtain
\begin{equation*}
\begin{split}
        &\quad 2\beta_{n,p,\alpha_0}  \int_\Omega f^{\alpha_0 + \frac{p}{2}+t} \eta^2
        + \mathfrak{C}_2 t  \int_\Omega  f^{\alpha_0 + \frac{p}{2}+t-3} | \nabla f |^2  \eta^2
        \\[2mm]
        &\leq
          2 (n-1) \kappa \int_{\Omega}  f^{\alpha_0 + \frac{p}{2}+t-1} \eta^2
          + \mathfrak{C}_1 \int_\Omega f^{\alpha_0 +\frac{p-3}{2} +t} |\nabla f|^2 \eta^2
          + 2(p+1) \int_\Omega f^{\alpha_0 +\frac{p}{2} +t-2} |\nabla f| |\nabla \eta| \eta.
    \end{split}
\end{equation*}
We have to claim that these integrals make sense since $u \in W_{loc}^{2,2} (\Omega) \cap C^{1,\beta}(\Omega) $, which deduced $f \in C^{\beta}(\Omega)$ and $|\nabla f| \in L^2_{loc}$ (cf. \cite{MR709038, MR727034, MR474389}).

Notice the following two Cauchy's inequalities
\begin{align*}
    \mathfrak{C}_1 f^{\alpha_0 +\frac{p-3}{2} +t} |\nabla f|\eta^2
    \leq
    \frac{\mathfrak{C}_2 t}{4} f^{\alpha_0 +\frac{p}{2} +t-3}  |\nabla f|^2\eta^2
        + \frac{\mathfrak{C}_1^2}{\mathfrak{C}_2 t } f^{\alpha_0 +\frac{p}{2} +t }   \eta^2,
\end{align*}
and
\begin{align*}
        2(p+1) f^{\alpha_0 +\frac{p}{2} +t-2} |\nabla f| |\nabla \eta| \eta
        \leq
         \frac{\mathfrak{C}_2 t}{4} f^{\alpha_0 +\frac{p}{2} +t-3}  |\nabla f|^2\eta^2
        \ +\
         \frac{4(p+1)^2}{\mathfrak{C}_2t} f^{\alpha_0 +\frac{p}{2} +t-1}  |\nabla \eta|^2,
\end{align*}
then it deduces that,
\begin{equation}
          \label{3.21-1111}
\begin{aligned}
        &\beta_{n,p,\alpha_0}  \int_\Omega f^{\alpha_0 + \frac{p}{2}+t} \eta^2
        \ +\ \frac{\mathfrak{C}_2 t }{2} \int_\Omega  f^{\alpha_0 + \frac{p}{2}+t-3} | \nabla f |^2  \eta^2
        \\[2mm]
        &\leq
         2 (n-1) \kappa \int_{\Omega}  f^{\alpha_0 + \frac{p}{2}+t-1} \eta^2
          \ +\      \frac{4(p+1)^2}{\mathfrak{C}_2t} \int_\Omega f^{\alpha_0 +\frac{p}{2} +t-1}  |\nabla \eta|^2,
\end{aligned}
\end{equation}
where  we  have chosen $t>1$ large enough such that
\begin{align}\label{3.21}
     \frac{\mathfrak{C}_1^2 }{\mathfrak{C}_2 t} \leq\beta_{n,p,\alpha_0}.
\end{align}

Moreover, substituting the following estimate into the left hand side of \eqref{3.21-1111}
\begin{align*}
    \frac{1}{2}
    \left|
        \nabla
            \left(
                        f^{\frac{\alpha_0 +t-1}{2} +\frac{p}{4}} \eta
            \right)
    \right|^2
&\leq
  \left|
        \nabla    f^{\frac{\alpha_0 +t-1}{2} +\frac{p}{4}}
        \right|^2
       \ +\    f^{\alpha_0 +t-1+\frac{p}{2}} | \nabla \eta|^2
        \\[2mm]
        &= \frac{(2\alpha_0 +2t+p -2)^2}{16} f^{\alpha_0 +t+\frac{p}{2}-3} | \nabla f|^2 \eta^2
           \ +\ f^{\alpha_0 +t-1+\frac{p}{2}}| \nabla \eta|^2,
\end{align*}
it arrives at
\begin{align*}
&\quad\beta_{n,p,\alpha_0}  \int_\Omega f^{\alpha_0 + \frac{p}{2}+t} \eta^2 \ +\ \frac{4\mathfrak{C}_2 t }{(2\alpha_0 +2t+p -2)^2} \int_\Omega
\left|\nabla\left(f^{\frac{\alpha_0 +t-1}{2} +\frac{p}{4}} \eta\right)\right|^2\\[2mm]
&\leq 2(n-1)\kappa\int_{\Omega} f^{\alpha_0 + \frac{p}{2}+t-1}\eta^2 \ +\  \left(\frac{4(p+1)^2}{\mathfrak{C}_2t} + \frac{8\mathfrak{C}_2 t }{(2\alpha_0 +2t+p -2)^2}\right)\int_\Omega f^{\alpha_0 +\frac{p}{2} +t-1}|\nabla \eta|^2.
\end{align*}
We claim that there exist $\mathfrak{C}_3= \frac{4\mathfrak{C}_2}{(2\alpha_0 + p)^2}$ and $ \mathfrak{C}_4= \frac{4(p+1)^2}{\mathfrak{C}_2}+ 2\mathfrak{C}_2  $  satisfying
\begin{align*}
\frac{\mathfrak{C}_3}{t}\leq  \frac{4\mathfrak{C}_2 t }{(2\alpha_0 +2t+p -2)^2}
\quad\quad {\rm and }\quad\quad \frac{4(p+1)^2}{\mathfrak{C}_2t} + \frac{8\mathfrak{C}_2 t}{(2\alpha_0 +2t+p -2)^2} \leq \frac{\mathfrak{C}_4}{t}.
\end{align*}
Hence,
\begin{align*}
&\quad\beta_{n,p,\alpha_0}  \int_\Omega f^{\alpha_0 + \frac{p}{2}+t} \eta^2 + \frac{\mathfrak{C}_3 }{t} \int_\Omega\left|\nabla\left(f^{\frac{\alpha_0 +t-1}{2} +\frac{p}{4}} \eta\right)\right|^2\\
\leq& 2(n-1)\kappa\int_{\Omega}f^{\alpha_0 + \frac{p}{2}+t-1} \eta^2 + \frac{\mathfrak{C}_4 }{t}\int_\Omega f^{\alpha_0 +\frac{p}{2} +t-1}|\nabla \eta|^2.
\end{align*}

After that, due to the Saloff-Coste's Sobolev inequality in \autoref{SC-Sobolev}
\begin{align*}
&e^{-C_n(1+ \sqrt{\kappa}R)} V^{\frac{2}{n}} R^{-2}\left\|f^{\frac{\alpha_0 + t-1}{2}+ \frac{p}{4}} \eta\right\|^2_{L^\frac{2n}{n-2}(\Omega)} \\
\leq& \int_\Omega \left| \nabla \left( f^{\frac{\alpha_0 + t-1}{2}+ \frac{p}{4}}\eta \right)\right|^2 + R^{-2} \int_{\Omega} f^{\alpha_0 +t +\frac{p}{2}-1}\eta^2,\quad {\rm for} \: n\geq 3,
\end{align*}
and
\begin{align*}
&e^{-C_{n^{\prime}}(1+ \sqrt{\kappa}R)} V^\frac{1}{2} R^{-2}\left\|f^{\frac{\alpha_0 + t-1}{2}+ \frac{p}{4}} \eta\right\|^2_{L^4(\Omega)} \\
\leq& \int_\Omega \left|\nabla \left( f^{\frac{\alpha_0 + t-1}{2}+\frac{p}{4}}\eta\right)\right|^2 + R^{-2} \int_{\Omega} f^{\alpha_0 +t +\frac{p}{2}-1} \eta^2,\quad {\rm for} \: n=2, \: n^{\prime}=4,
\end{align*}
we obtain
\begin{align*}
&\quad\beta_{n,p,\alpha_0}  \int_\Omega f^{\alpha_0 + \frac{p}{2}+t} \eta^2 + \frac{\mathfrak{C}_3 }{t}e^{-C_n(1+ \sqrt{\kappa}R)} V^{\frac{2}{n}} R^{-2}\left\|f^{\frac{\alpha_0 + t-1}{2}+ \frac{p}{4}} \eta\right\|^2_{L^\frac{2n}{n-2}(\Omega)}\\[2mm]
&\leq 2(n-1)\kappa \int_{\Omega}  f^{\alpha_0 + \frac{p}{2}+t-1} \eta^2 + \frac{\mathfrak{C}_4 }{t}\int_\Omega f^{\alpha_0 +\frac{p}{2} +t-1}  |\nabla \eta|^2 + \frac{\mathfrak{C}_3}{tR^{2}}\int_{\Omega} f^{\alpha_0 +t +\frac{p}{2}-1}\eta^2,\quad  { \rm for} \:n\geq 3,
\end{align*}
and
\begin{align*}
&\quad \beta_{2,p,\alpha_0}  \int_\Omega f^{\alpha_0 + \frac{p}{2}+t} \eta^2 + \frac{\mathfrak{C}_3 }{t} e^{-C_4(1+ \sqrt{\kappa}R)} V^\frac{1}{2} R^{-2}\left\|f^{\frac{\alpha_0 + t-1}{2}+ \frac{p}{4}}\eta\right\|^2_{L^4(\Omega)}\\[2mm]
&\leq 2\kappa \int_{\Omega}  f^{\alpha_0 + \frac{p}{2}+t-1} \eta^2 + \frac{\mathfrak{C}_4 }{t}\int_\Omega f^{\alpha_0 +\frac{p}{2} +t-1}|\nabla \eta|^2 + \frac{\mathfrak{C}_3}{tR^{2}}\int_{\Omega} f^{\alpha_0 +t +\frac{p}{2}-1} \eta^2, \quad  {\rm for} \: n=2.
\end{align*}
Here and after, for convenience, we fix $n^{\prime}=4$ for the case $n=2$. Setting
\begin{align*}
c_1(n,p,\alpha_0) = \max \left\{  C_n+1,\, 2,\, \frac{\mathfrak{C}_1^2}{\mathfrak{C}_2\beta_{n,p,\alpha_0}}\right\}
\end{align*}
and $t_0 = c_1(n,p,\alpha_0)(1+ \sqrt{\kappa}R),$ then for any $t\geq t_0$, the inequality \eqref{3.21} is true. Therefore, by choosing $t\geq t_0$ satisfying
\begin{align*}
2(n-1) \kappa R^2 \leq\frac{2(n-1)}{c_1^2 (n,p,\alpha_0)} t_0^2 \quad\quad \mathrm{and} \quad\quad
\frac{\mathfrak{C}_3}{t} \leq \frac{\mathfrak{C}_3}{c_1 (n,p,\alpha_0)},
\end{align*}
there exists a non-negative constant $\mathfrak{C}_5 = \mathfrak{C}_5(n,p)$ such that
\begin{align*}
2(n-1) \kappa R^2 + \frac{\mathfrak{C}_3}{t}  \leq \mathfrak{C}_5 t_0^2.
\end{align*}
Thus,
\begin{equation}\label{3.27}
\begin{split}
 &\frac{\mathfrak{C}_5 t_0^2 }{R^2}  \int_{\Omega}  f^{\alpha_0 + \frac{p}{2}+t-1} \eta^2
          + \frac{\mathfrak{C}_4 }{t}
                \int_\Omega f^{\alpha_0 +\frac{p}{2} +t-1}  |\nabla \eta|^2
                \\[2mm]
                \geq &\left\{
\begin{aligned}
    \beta_{n,p,\alpha_0}  \int_\Omega f^{\alpha_0 + \frac{p}{2}+t} \eta^2
    \ +\ \frac{\mathfrak{C}_3 }{t}    e^{-t_0} V^{\frac{2}{n}} R^{-2}
    \left\|
            f^{\frac{\alpha_0 + t-1}{2}+ \frac{p}{4}} \eta
    \right\|^2_{L^\frac{2n}{n-2}(\Omega)}, \quad {\rm for} \:n\geq 3,
    \\[2mm]
    \beta_{2,p, \alpha_0}  \int_\Omega f^{\alpha_0 + \frac{p}{2}+t} \eta^2
     \ +\
      \frac{\mathfrak{C}_3 }{t}    e^{-t_0}  V^\frac{1}{2} R^{-2}
    \left\|
            f^{\frac{\alpha_0 + t-1}{2}+ \frac{p}{4}} \eta
    \right\|^2_{L^4(\Omega)}, \quad {\rm for} \: n=2.
\end{aligned}
    \right.
    \end{split}
    \end{equation}
The proof is completed.
\end{proof}

\subsection{ Local $L^\gamma$-upper bound  of the gradient }\label{sec2.3}\

In this section, we will prove the $L^\gamma$-upper bound of the gradient of the weak solutions to the $p $-Laplacian equation with  cubic polynomial nonlinearity \eqref{GAC}.
\begin{lemma}\label{lemma3}
Let $(M,g) $ be a complete Riemannian manifold with $\mathrm{Ric} \geq -(n-1) \kappa $. Suppose that $v $ is a weak solutions to the $p $-Laplacian equation with  cubic polynomial nonlinearity \eqref{GAC} on the geodesic ball $B_R(o) \subset M $, then there exists a constant $\mathfrak{C}_8 = \mathfrak{C}_8(n,p) >0$ such that
\begin{align}\label{3.28}
\|f\|_{L^\gamma( B_{\frac{3R}{4}}(o))}\leq\mathfrak{C}_8 V^{\frac{1}{\gamma}}\frac{t_0^2}{R^2},\quad \gamma :=\left\{
\begin{aligned}
\frac{n}{n-2} \left( \alpha_0 + \frac{p}{2}+ t_0  -1 \right), &\quad {\rm for } \:  n \geq 3, \\[2mm]
2\left( \alpha_0 + \frac{p}{2}+ t_0  -1 \right) , &\quad {\rm for } \: n =2,
\end{aligned}
\right.
\end{align}
where $V$ is the volume of geodesic ball $B_R(o),$ and $\alpha_0$ is fixed as same as in \autoref{lemma2}.
\end{lemma}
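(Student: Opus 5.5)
The plan is to follow the Wang--Zhang / He--Wang--Wei scheme and apply the integral inequality of \autoref{lemma2} with the fixed exponent $t=t_0$. The key step is to absorb the lower-order term $\mathfrak{C}_5 t_0^2R^{-2}\int f^{\alpha_0+t_0+\frac p2-1}\eta^2$ into the good term $\beta_{n,p,\alpha_0}\int f^{\alpha_0+t_0+\frac p2}\eta^2$. For this I split $\Omega$ into the set where $f$ is large and the set where it is small:
\[
\Omega_1=\Big\{f\ge \tfrac{2\mathfrak{C}_5 t_0^2}{\beta_{n,p,\alpha_0}R^2}\Big\},\qquad \Omega_2=\Omega\setminus\Omega_1 .
\]
\begin{itemize}
\item On $\Omega_1$, the integrand satisfies $\mathfrak{C}_5t_0^2R^{-2}f^{\alpha_0+t_0+\frac p2-1}\le \frac{\beta}{2}f^{\alpha_0+t_0+\frac p2}$. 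This part is therefore absorbed into the left-hand side.
\item On $\Omega_2$, the bound $f\lesssim t_0^2R^{-2}$ gives
\[
\mathfrak{C}_5t_0^2R^{-2}\int_{\Omega_2}f^{\alpha_0+t_0+\frac p2-1}\eta^2\le C^{\alpha_0+t_0+\frac p2}\Big(\tfrac{t_0^2}{R^2}\Big)^{\alpha_0+t_0+\frac p2}V .
\]
\end{itemize}

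Next I choose the cut-off. Take $\eta_0\in C_0^\infty(B_R)$ with $0\le\eta_0\le1$, $\eta_0\equiv1$ on $B_{3R/4}$ and $|\nabla\eta_0|\le C/R$. Set $\eta=\eta_0^{\,\alpha_0+t_0+\frac p2}$. Then
\[
|\nabla\eta|^2\le C\,(\alpha_0+t_0+\tfrac p2)^2R^{-2}\,\eta_0^{\,2(\alpha_0+t_0+\frac p2)-2}.
\]
The gradient term $\frac{\mathfrak{C}_4}{t_0}\int f^{\alpha_0+t_0+\frac p2-1}|\nabla\eta|^2$ is then bounded by $Ct_0R^{-2}\int f^{a-1}\eta_0^{2a-2}$, where $a=\alpha_0+t_0+\frac p2$.

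I treat this by H\"older and Young with exponents $\frac{a}{a-1}$ and $a$:
\[
Ct_0R^{-2}\int f^{a-1}\eta_0^{2a-2}\le \frac{\beta}{4}\int f^{a}\eta_0^{2a}+C^{a}\Big(\frac{t_0^2}{R^2}\Big)^{a}V .
\]
The constant on the second term is consistent because $t_0\ge1$, and it uses $t_0/R^2\lesssim t_0^2/R^2$. The first term is absorbed on the left.

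What remains is
\[
\frac{\mathfrak{C}_3}{t_0}e^{-t_0}V^{\frac2n}R^{-2}\big\|f^{\frac a2-\frac12}\eta\big\|^2_{L^{\frac{2n}{n-2}}}\le C^a\Big(\frac{t_0^2}{R^2}\Big)^{a}V .
\]
On $B_{3R/4}$ we have $\eta=1$, so the left-hand norm is $\|f\|_{L^\gamma(B_{3R/4})}^{a-1}$ with $\gamma=\frac{n}{n-2}(a-1)$. Raise both sides to the power $1/(a-1)$:
\begin{itemize}
\item The factors $e^{t_0}$, $t_0$, $C^a$ and $(t_0^2/R^2)^{a/(a-1)-1}\cdot R^{2/(a-1)}$ each contribute only bounded constants. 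This uses $t_0\le a-1$, so $e^{t_0/(a-1)}\le e$.
\item The powers of $V$ combine to $V^{(1-\frac2n)\frac1{a-1}}=V^{1/\gamma}$.
\end{itemize}
This yields $\|f\|_{L^\gamma(B_{3R/4})}\le\mathfrak{C}_8V^{1/\gamma}t_0^2R^{-2}$. The case $n=2$ is identical with $n'=4$.

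The main obstacle is the bookkeeping of exponents in that final step. I must check that the mixed factor $(t_0^2/R^2)^{a/(a-1)}\,R^{2/(a-1)}$ reduces to $t_0^2R^{-2}$ times a bounded power of $t_0$. It does, since $t_0^{2/(a-1)}\le C$, and all constants depend only on $n,p$.

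The same argument applies verbatim with \eqref{lowerbound2} in place of \eqref{lowerbound1}. In that case there is no $\mathfrak{C}_1$ term, so it is simpler.
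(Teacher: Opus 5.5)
Your proposal is correct and follows the paper's proof essentially step by step. It uses the same split $\Omega_1=\{f>2\mathfrak{C}_5t_0^2/(\beta_{n,p,\alpha_0}R^2)\}$ with $t=t_0$ in \autoref{lemma2}, and the same cut-off $\eta=\eta_1^{\alpha_0+\frac p2+t_0}$, handled by H\"older and Young with exponents $\frac{a}{a-1}$ and $a$. The final $(a-1)$-th root is taken the same way, with $t_0\le a-1$ keeping $e^{t_0/(a-1)}$ and the stray powers of $t_0$ bounded. This is precisely the bookkeeping the paper compresses into the constants $\mathfrak{C}_7$ and $\mathfrak{C}_8$.
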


\begin{proof}
Setting
$$\Omega_1 = \left\{f> \frac{2\mathfrak{C}_5 t_0^2}{\beta_{n,p,\alpha_0} R^2}\right\}\quad\mbox{and}\quad \Omega_2 = \Omega  \setminus \Omega_1,$$
a direct calculation shows
\begin{align*}
\frac{ \mathfrak{C}_5t_0^2}{R^2} \int_\Omega f^{\alpha_0 + \frac{p}{2}+ t  -1}\eta^2
&=\frac{ \mathfrak{C}_5t_0^2}{R^2} \int_{\Omega_1} f^{\alpha_0 + \frac{p}{2}+ t  -1}\eta^2\ +\ \frac{ \mathfrak{C}_5t_0^2}{R^2} \int_{\Omega_2}  f^{\alpha_0 + \frac{p}{2}+ t  -1}\eta^2\\[2mm]
& \leq \frac{\beta_{n,p,\alpha_0}}{2}\int_\Omega f^{\alpha_0 + \frac{p}{2}+ t }\eta^2 \ +\ \frac{2\mathfrak{C}_5 t_0^2 V}{R^2} \left( \frac{2\mathfrak{C}_5 t_0^2}{\beta_{n,p,\alpha_0}R^2}\right)^{\alpha_0 + \frac{p}{2}+ t  -1}
\end{align*}
where the volume of $B_R(o)$ denoted by $V.$  Recall \eqref{3.27}, then it draws, by substituting above inequality and choosing $t=t_0$,
\begin{equation}\label{3.30}
\begin{split}
          &\quad
          \frac{\beta_{n,p,\alpha_0}}{2}  \int_\Omega f^{\alpha_0 + \frac{p}{2}+t} \eta^2
    + \frac{\mathfrak{C}_3 }{t_0}    e^{-t_0} V^{\frac{2}{n}} R^{-2}
    \left\|
            f^{\frac{\alpha_0 + t-1}{2}+ \frac{p}{4}} \eta
    \right\|^2_{L^\frac{2n}{n-2}(\Omega)}
    \\[2mm]
    & \leq
        \frac{2\mathfrak{C}_5 t_0^2 V}{R^2}
        \left(
                      \frac{2\mathfrak{C}_5 t_0^2}{\beta_{n,p,\alpha_0}R^2}
        \right)^{\alpha_0 + t_0 + \frac{p}{2}-1}
        + \frac{\mathfrak{C}_4 }{t_0}
                \int_\Omega f^{\alpha_0 +\frac{p}{2} +t-1}  |\nabla \eta|^2, \quad { \rm for} \:n\geq 3,
    \end{split}
\end{equation}
and
\begin{equation}\label{3.87}
\begin{split}
 &\quad \beta_{2,p,\alpha_0}  \int_\Omega f^{\alpha_0 + \frac{p}{2}+t} \eta^2
      +
      \frac{\mathfrak{C}_3}{t_0}    e^{-t_0}  V^\frac{1}{2} R^{-2}
    \left\|
            f^{\frac{\alpha_0 + t-1}{2}+ \frac{p}{4}} \eta
    \right\|^2_{L^4(\Omega)}
   \\[2mm]
    & \leq
        \frac{2\mathfrak{C}_5 t_0^2 V}{R^2}
        \left(
                      \frac{2\mathfrak{C}_5 t_0^2}{\beta_{2,p,\alpha_0}R^2}
        \right)^{\alpha_0 + t_0 + \frac{p}{2}-1}
        + \frac{\mathfrak{C}_4 }{t_0}
                \int_\Omega f^{\alpha_0 +\frac{p}{2} +t-1}  |\nabla \eta|^2,\quad { \rm for} \:n=2.
\end{split}
\end{equation}
According to the method of cut-off function, we choose the function $\eta_1 \in C_0^{\infty } (B_R(o)) $ satisfying
\begin{equation*}
0\leq \eta_1 \leq 1,
\qquad
|\nabla \eta_1 | \leq \frac{C(n)}{R} \quad  \mathrm{in}  \  B_{R} (o),
\qquad
\eta_1 \equiv 1 \quad \mathrm{in}  \  B_{\frac{3R}{4}} (o).
\end{equation*}
As a consequence, by considering the proposition of above cut-off function, choosing $\eta = \eta_1^{\alpha_0 + \frac{p}{2}+ t_0  }$, and using the H\"older's inequality and the Cauchy's inequality, we obtain
\begin{equation}\label{3.31}
\begin{split}
\frac{\mathfrak{C}_4 }{t_0}\int_\Omega f^{\alpha_0 +\frac{p}{2} +t-1}|\nabla \eta|^2
&\leq\frac{\mathfrak{C}_6 t_0}{R^2}\int_\Omega f^{\alpha_0 +\frac{p}{2}+t-1}\eta^{\frac{2\alpha_0 + p + 2t_0 -2}{\alpha_0 + \frac{p}{2}+t_0}}\\[2mm]
&\leq\frac{\mathfrak{C}_6 t_0}{R^2}\left(\int_\Omega  f^{\alpha_0 +\frac{p}{2} +t_0}\eta^2\right)^\frac{\alpha_0 +\frac{p}{2} +t_0-1}{\alpha_0 + \frac{p}{2}+t_0}V^{\frac{1}{\alpha_0 + \frac{p}{2}+t_0}}\\[2mm]
&\leq\frac{\beta_{n,p,\alpha_0}}{2}\left[\int_\Omega  f^{\alpha_0 +\frac{p}{2} +t_0}\eta^2 +\left(\frac{2\mathfrak{C}_6 t_0 }{\beta_{n,p,\alpha_0}R^2}\right)^{\alpha_0 + \frac{p}{2}+t_0} V\right],
\end{split}
\end{equation}
where we have used
\begin{align*}
\mathfrak{C}_4 R^2 |\nabla \eta |^2\leq \mathfrak{C}_4 |C(n)|^2 \left( \alpha_0 + \frac{p}{2}+t_0 \right)^2 \eta^{\frac{2\alpha_0  + p + 2t_0 -2}{\alpha_0 + \frac{p}{2}+t_0}}\leq \mathfrak{C}_6 t_0^2 \eta^{\frac{2\alpha_0  + p + 2t_0 -2}{\alpha_0 + \frac{p}{2}+t_0}}.
\end{align*}
Substituting  \eqref{3.31} into \eqref{3.30} and \eqref{3.87},  it arrives at for $n\geq3$
\begin{align}
\left\|f^{\frac{\alpha_0 + t-1}{2}+ \frac{p}{4}} \eta\right\|^2_{L^\frac{2n}{n-2}(\Omega)}
\leq&\frac{t_0}{\mathfrak{C}_3}e^{t_0} V^{1-\frac{2}{n}} R^2\left[\frac{2\mathfrak{C}_5 t_0^2 }{R^2}\left(\frac{2\mathfrak{C}_5 t_0^2}{\beta_{n,p,\alpha_0}R^2}\right)^{\alpha_0 + t_0 + \frac{p}{2}-1}\right.\nonumber\\
&\left.+\frac{2\mathfrak{C}_6 t_0^2 }{R^2}\left(\frac{2\mathfrak{C}_6 t_0}{\beta_{n,p,\alpha_0} R^2}\right)^{\alpha_0 + t_0 + \frac{p}{2}-1}\right]
\nonumber\\[2mm]
&\leq\mathfrak{C}_7^{^{\alpha_0 + t_0 + \frac{p}{2}-1}} e^{t_0} V^{1-\frac{2}{n}} t_0^3\left(\frac{t_0^2}{R^2}\right)^{\alpha_0 + t_0 + \frac{p}{2}-1}
\label{3.45}
\end{align}
and for $n=2$
\begin{align}
\left\|f^{\frac{\alpha_0 + t-1}{2}+ \frac{p}{4}} \eta \right\|^2_{L^4(\Omega)}
\leq&\frac{t_0}{\mathfrak{C}_3}e^{t_0} V^{\frac{1}{2}} R^2 \left[\frac{2\mathfrak{C}_5 t_0^2 }{R^2}\left(\frac{2\mathfrak{C}_5 t_0^2}{\beta_{2,p,\alpha_0}R^2}\right)^{\alpha_0 + t_0 + \frac{p}{2}-1}\right.\nonumber\\
&\left. + \frac{2\mathfrak{C}_6 t_0^2 }{R^2}\left(\frac{2\mathfrak{C}_6 t_0}{ \beta_{2,p,\alpha_0} R^2 }\right)^{\alpha_0 + t_0 + \frac{p}{2}-1}\right]\nonumber\\[2mm]
\leq&\mathfrak{C}_7^{^{\alpha_0 + t_0 + \frac{p}{2}-1}} e^{t_0} V^{\frac{1}{2}} t_0^3\left(\frac{t_0^2}{R^2} \right)^{\alpha_0 + t_0 + \frac{p}{2}-1}
\label{3.46}
\end{align}
where the constant
$$
\mathfrak{C}_7 = \mathfrak{C}_7 (n,p)=\left(2\dfrac{\mathfrak{C}_5+ \mathfrak{C}_6}{\mathfrak{C}_3} +1 \right) \left(  \dfrac{2\mathfrak{C}_5 + 2\mathfrak{C}_6}{\beta_{n,p,\alpha_0}}+1\right)
$$
satisfies
\begin{align*}
\mathfrak{C}_7^{\alpha_0 + t_0 + \frac{p}{2}-1}\geq
    \frac{2\mathfrak{C}_5 }{\mathfrak{C}_3} \left(
                              \frac{2\mathfrak{C}_5 }{\beta_{n,p,\alpha_0}}
                        \right)^{\alpha_0 + t_0 + \frac{p}{2}-1}
    +
    \frac{\mathfrak{C}_6}{\mathfrak{C}_3 t_0}\left(
               \frac{2\mathfrak{C}_6 }{\beta_{n,p,\alpha_0}t_0}
               \right)^{\alpha_0 + t_0 + \frac{p}{2}-1}.
\end{align*}
Denoting
$$
\mathfrak{C}_8 := \mathfrak{C}_7 \sup\limits_{t_0 \geq 1} e^{\frac{t_0}{\alpha_0 + t_0 + \frac{p}{2}-1}}
          t_0^{\frac{3}{\alpha_0 + t_0 + \frac{p}{2}-1}}
$$
and
\begin{equation*}
\gamma :=\left\{
    \begin{aligned}
        \frac{n}{n-2} \left( \alpha_0 + \frac{p}{2}+ t_0  -1 \right), &\quad {\rm for } \:  n \geq 3,
        \\[2mm]
        2\left( \alpha_0 + \frac{p}{2}+ t_0  -1 \right) , &\quad {\rm for } \: n =2,
    \end{aligned}
    \right.
\end{equation*}
 we obtain the following result from \eqref{3.45} and \eqref{3.46}
\begin{align*}
    \left\|
                f \eta^{\frac{2}{\alpha_0 + t_0 + \frac{p}{2}-1}}
    \right\|_{L^\gamma (\Omega)}
    \leq
    \mathfrak{C}_7 e^{\frac{t_0}{\alpha_0 + t_0 + \frac{p}{2}-1}}
             t_0^{\frac{3}{\alpha_0 + t_0 + \frac{p}{2}-1}}
            \frac{ V^{\frac{1}{\gamma}}
 t_0^2}{R^2}
        \leq
        \frac{   \mathfrak{C}_8 V^{\frac{1}{\gamma}}   t_0^2}{R^2}.
\end{align*}
Here we have used
$$
t_0^{\frac{3}{\alpha_0+ t_0 + \frac{p}{2}-1}} \leq t_0^\frac{3}{t_0} \leq \max\limits_{x\geq 1} x^\frac{3}{x}= e^\frac{3}{e}.
$$
Based on $\eta \equiv 1\:  \mathrm{in } \: B_{\frac{3R}{4}}(o)$, it deduces
\begin{align*}
\| f\|_{L^\gamma \left( B_{\frac{3R}{4}}(o)\right)}\leq \frac{\mathfrak{C}_8 V^{\frac{1}{\gamma}}t_0^2}{R^2}.
\end{align*}
The proof of this lemma is completed.
\end{proof}

\subsection{ The gradient estimate  by Moser iteration: proof of Theorem \ref{theorem1}}\label{sec2.4}\

\begin{lemma}
Let $(M^n, g) $ be a complete Riemannian manifold with $\mathrm{Ric} \geq -(n-1) \kappa$ and $n\geq 2$.
If $v $ is a weak solutions to the $p $-Laplacian equation with  cubic polynomial nonlinearity \eqref{GAC} on the geodesic ball $B_R(o) \subset M $, then there exists a constant $\mathfrak{C}_{11} = \mathfrak{C}_{11}(n,p) >0$,  such that
\begin{align*}
\|f\|_{L^\infty( R_{\frac{R}{2}}(o))}\leq\mathfrak{C}_{11}  \left(  \frac{1+ \sqrt{\kappa}R}{R }\right)^2,
\end{align*}
where $V$ is the volume of the geodesic ball $B_R(o) $.
\end{lemma}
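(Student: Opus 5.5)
Our plan is the standard Moser iteration, starting from the integral inequality of Lemma \ref{lemma2} and seeded by the $L^\gamma$ bound of Lemma \ref{lemma3}.

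\textbf{Step 1: the recursive inequality.} In Lemma \ref{lemma2} we drop the nonnegative term $\beta_{n,p,\alpha_0}\int_\Omega f^{\alpha_0+\frac p2+t}\eta^2$ and multiply through by $t e^{t_0}V^{-2/n}R^2/\mathfrak{C}_3$. With $\chi=\frac{n}{n-2}$ (and $\chi=2$ when $n=2$, using $n'=4$), this gives for every $t\ge t_0$
\[
\Big\|f^{\frac{\alpha_0+t-1}{2}+\frac p4}\eta\Big\|^2_{L^{2\chi}(\Omega)}\le \frac{e^{t_0}V^{-\frac 2n}}{\mathfrak{C}_3}\int_\Omega f^{\alpha_0+t+\frac p2-1}\big(\mathfrak{C}_5t_0^2 t\,\eta^2+\mathfrak{C}_4R^2|\nabla\eta|^2\big).
\]

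\textbf{Step 2: the iteration scheme.} Set $\beta_1=\alpha_0+t_0+\frac p2-1$ and $\beta_{k+1}=\chi\beta_k$, so that $\gamma=\chi\beta_1$ is the exponent in Lemma \ref{lemma3}. Take radii $r_k=\frac R2+\frac R{4^k}$, which gives $r_1=\frac{3R}{4}$ and $r_k\downarrow\frac R2$. Choose cutoffs $\eta_k\in C_0^\infty(B_{r_k})$ with $\eta_k\equiv1$ on $B_{r_{k+1}}$ and $|\nabla\eta_k|\le C(n)4^k/R$. At stage $k$ apply Step 1 with $t=t_k:=\beta_k-\alpha_0-\frac p2+1\ge t_0$, so that $t_k\le\beta_k$. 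Since $\eta_k\le1$, this yields
\[
\|f\|_{L^{\beta_{k+1}}(B_{r_{k+1}})}\le\Big(\frac{C e^{t_0}V^{-2/n}}{R^2}\big(t_0^2\beta_k+16^k\big)\Big)^{1/\beta_k}\|f\|_{L^{\beta_k}(B_{r_k})}.
\]
Here $t_0^2\beta_k+16^k\le C\,t_0^3\,(16\chi)^k$.

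\textbf{Step 3: summing the product.} Iterating, the product of the prefactors is
\[
\Big(Ce^{t_0}t_0^3V^{-2/n}R^{-2}\Big)^{\sum_k\beta_k^{-1}}(16\chi)^{\sum_k k/\beta_k}.
\]
Both sums are geometric, with $\sum_{k\ge1}\beta_k^{-1}=\frac{\chi}{\chi-1}\beta_1^{-1}=\frac n{2\beta_1}$ (and $\frac{2}{\beta_1}$ when $n=2$) and $\sum_k k/\beta_k\le C/\beta_1$. Because $\beta_1\ge t_0\ge1$, the factors $e^{t_0/\beta_1}$, $t_0^{3/\beta_1}$, $C^{1/\beta_1}$ and $(16\chi)^{C/\beta_1}$ are all bounded by constants depending only on $(n,p)$, which is exactly the device already used in Lemma \ref{lemma3}. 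The volume contributes $V^{-\frac2n\cdot\frac n{2\beta_1}}=V^{-1/\beta_1}$ and the radius contributes $R^{-n/\beta_1}$. Strictly speaking, the index should start so that the seed norm is the $L^\gamma(B_{3R/4})$ norm. Accounting for this, the volume powers combine as $V^{-1/\gamma}$ against the $V^{1/\gamma}$ coming from Lemma \ref{lemma3}.

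\textbf{Step 4: the main obstacle, bookkeeping of scales.} The step that needs care is making the $V$ and $R$ exponents cancel exactly. Lemma \ref{lemma2} is normalized with $V^{2/n}R^{-2}$ precisely so that $V^{2/n}/R^{2}$ is scale-invariant relative to the $L^{2\chi}$–$L^2$ pairing. We will therefore track the quantity $\big(V^{-1}\int_{B_{r_k}} f^{\beta_k}\big)^{1/\beta_k}$, i.e.\ averaged norms, instead of raw $R$ powers. With averaged norms the recursion loses only the bounded factors from Step 3, so that
\[
\|f\|_{L^\infty(B_{R/2})}\le C(n,p)\,V^{-1/\gamma}\|f\|_{L^\gamma(B_{3R/4})}.
\]
Lemma \ref{lemma3} then gives $\|f\|_{L^\infty(B_{R/2})}\le C\,t_0^2/R^2$. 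Since $t_0=c_1(n,p,\alpha_0)(1+\sqrt\kappa R)$, we get $\mathfrak{C}_{11}(1+\sqrt\kappa R)^2/R^2$, as claimed.

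In the tanh case, Lemma \ref{lemma3.5.2} has no gradient term $|\nabla f|f^{1/2}$. The same argument therefore goes through verbatim, and is in fact simpler.
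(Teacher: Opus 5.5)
Your scheme is the paper's proof: you drop the $\beta_{n,p,\alpha_0}$-term in Lemma~\ref{lemma2}, take $r_k=\frac R2+\frac R{4^k}$ with $|\nabla\eta_k|\le C4^k/R$, and raise the exponent by $\chi$ at each step, with $t_k$ fixed by $t_k+\alpha_0+\frac p2-1=\beta_k$. You then sum the geometric series, use $\beta_1\ge t_0$ to control $e^{t_0/\beta_1}$ and $t_0^{3/\beta_1}$, and seed with Lemma~\ref{lemma3}. The paper starts directly at $\beta_1=\gamma$, which is the index shift you mention. Keeping your indexing also works, via H\"older on $B_{3R/4}$: $\|f\|_{L^{\gamma/\chi}(B_{3R/4})}\le V^{(\chi-1)/\gamma}\|f\|_{L^\gamma(B_{3R/4})}$.

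There is, however, a bookkeeping error in Steps 2--4 that you try to work around instead of removing. The factor $R^{-2}$ in your Step 2 recursion is spurious: your own Step 1 inequality contains no negative power of $R$, because $\mathfrak{C}_4R^2|\nabla\eta_k|^2\le\mathfrak{C}_4C^2 16^k$. The correct recursion is $\|f\|_{L^{\beta_{k+1}}(B_{r_{k+1}})}\le\bigl(Ct_0^3e^{t_0}V^{-2/n}(16\chi)^k\bigr)^{1/\beta_k}\|f\|_{L^{\beta_k}(B_{r_k})}$, exactly as in the paper. The scale balance you allude to has already happened in Step 1: the $R^{-2}$ accompanying $V^{2/n}$ in Saloff-Coste's inequality cancels the $R^{-2}$ in $\mathfrak{C}_5t_0^2R^{-2}$ and in $|\nabla\eta_k|^2$.

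The device you propose in Step 4 would not have rescued the argument had the factor been real. Passing to averaged norms only redistributes powers of $V$, and these cancel step by step anyway, since $\frac1{\beta_k}-\frac{2}{n\beta_k}-\frac1{\beta_{k+1}}=0$. It cannot absorb a prefactor $R^{-2\sum_k\beta_k^{-1}}$. Such a prefactor is not bounded by a constant depending only on $(n,p)$: for $\kappa=0$ one has $t_0=c_1$, so $\gamma$ is fixed and the factor blows up as $R\to0$.

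Delete the $R^{-2}$ in Step 2 and the ``radius contribution'' in Step 3. Then your argument coincides with the paper's and gives $\|f\|_{L^\infty(B_{R/2})}\le C(n,p)V^{-1/\gamma}\|f\|_{L^\gamma(B_{3R/4})}$. Lemma~\ref{lemma3} and $t_0=c_1(1+\sqrt\kappa R)$ then finish the proof.
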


\begin{proof}
Recall \eqref{3.27} without the first term in the left hand side
\begin{align}
&\frac{\mathfrak{C}_5 t_0^2 }{R^2}  \int_{\Omega}  f^{\alpha_0 + \frac{p}{2}+t-1} \eta^2 \ +\ \frac{\mathfrak{C}_4 }{t}
\int_\Omega f^{\alpha_0 +\frac{p}{2} +t-1}  |\nabla \eta|^2 \nonumber\\[2mm]
&\geq \left\{
\begin{aligned}
\frac{\mathfrak{C}_3 }{t}    e^{-t_0} V^{\frac{2}{n}} R^{-2} \left\|f^{\frac{\alpha_0 + t-1}{2}+ \frac{p}{4}}\eta\right\|^2_{L^\frac{2n}{n-2}(\Omega)}, \quad {\rm for} \:n\geq 3, \\[2mm]
\frac{\mathfrak{C}_3 }{t}e^{-t_0}V^\frac{1}{2} R^{-2}\left\|f^{\frac{\alpha_0 + t-1}{2}+ \frac{p}{4}}\eta\right\|^2_{L^4(\Omega)},
\quad {\rm for} \: n=2.
\end{aligned}
\right.
\label{3.33}
\end{align}
Setting $r_k = \frac{R}{2} + \frac{R}{4^k}$ and $\Omega_k = B_{r_k} (o) $, and choosing the cut-off function sequence $\{ \eta_k\}_{k=1}^{\infty}\subset C^\infty(\Omega_k)$ satisfying
\begin{equation}\label{text function2}
0\leq \eta_k \leq 1, \qquad |\nabla \eta_k| \leq \frac{4^k C}{R} \ \mathrm{in }\: B_{r_{k}}(o)
\qquad \mathrm{and } \qquad
\eta_k \equiv1 \ \mathrm{in }\: B_{r_{k+1}}(o),
\end{equation}
and then replacing $\eta $ by $\eta_k$ in \eqref{3.33}, we obtain for $n\geq 3$
\begin{align*}
\mathfrak{C}_3 e^{-t_0} V^{\frac{2}{n}}
\left\|f^{\frac{\alpha_0 + t_0-1}{2}+ \frac{p}{4}} \eta_k\right\|^2_{L^\frac{2n}{n-2}(\Omega_k)}
&\leq\mathfrak{C}_5 t_0^2 t \int_{\Omega_k}  f^{\alpha_0 + \frac{p}{2}+t -1}\eta^2_k \ +\ \mathfrak{C}_4 R^2
\int_{\Omega_k} f^{\alpha_0 +\frac{p}{2} +t -1}  |\nabla \eta_k|^2\\[2mm]
&\leq\left(\mathfrak{C}_5 t_0^2 t + C^2  \mathfrak{C}_4 4^{2k}\right)\int_{\Omega_k}  f^{\alpha_0 +\frac{p}{2} +t -1}
\end{align*}
and for $n=2$
\begin{align*}
\mathfrak{C}_3 e^{-t_0}V^\frac{1}{2} R^{-2}\left\|f^{\frac{\alpha_0 + t-1}{2}+ \frac{p}{4}} \eta\right\|^2_{L^4(\Omega)}
&\leq\left(\mathfrak{C}_5 t_0^2 t + C^2 \mathfrak{C}_4 4^{2k}\right)\int_{\Omega_k}  f^{\alpha_0 +\frac{p}{2} +t -1}.
\end{align*}
\medskip

For $k \in \mathbb{N}^+,$ we set $\beta_1 : = \gamma,\ $ and
\begin{equation}
\beta_{k+1} :=
\begin{cases}
\dfrac{n }{n-2}\beta_k= \left(\dfrac{n }{n-2}\right)^{k+1} \gamma, &\quad {\rm for } \: n\geq 3, \\[3mm]
2\beta_k = 2^{k+1} \gamma, &\quad {\rm for }\: n=2,
\end{cases}
\end{equation}
 and  $t=t_k $ in such a way that
\begin{align*}
    t_k +\frac{p}{2 }+ \alpha_0 -1 =\beta_k.
\end{align*}
By
\begin{equation*}
t_k < \beta_k =\left\{
\begin{aligned}
\left(\dfrac{n }{n-2}\right)^k    \left(t_0 + \frac{p}{2} +\alpha_0 -1\right)\quad {\rm for } \: n\geq 3,\\[2mm]
2^k\left(t_0 + \frac{p}{2} +\alpha_0 -1\right)\quad {\rm for }\: n=2,
\end{aligned}
\right\}
\leq 4^{2k} \left(t_0 + \frac{p}{2} +\alpha_0 -1\right)
\end{equation*}
drew from the facts that $\alpha_0 $ is large enough and $\dfrac{n}{n-2} < 4^{2}$,
together with $t_0 \geq1$  and
$$\mathfrak{C}_9 :=\mathfrak{C}_3^{-1}  \left[   \mathfrak{C}_5 +\mathfrak{C}_5\left( \frac{p}{2} +\alpha_0 \right)+C^2\right],$$
we then have for $n\geq 3$
\begin{align*}
\mathfrak{C}_3 \left(\int_{\Omega_k} f^{\beta_{k+1}} \eta_k^\frac{2n}{n-2}\right)^\frac{n-2}{n}
&\leq e^{t_0 }V^{-\frac{2}{n}}
\left[\mathfrak{C}_5 t_0^2\left(t_0 + \frac{p}{2} +\alpha_0 -1\right)\left(\frac{n}{n-2}\right)^k + C^2 4^{2k}\right]
\int_{\Omega_k} f^{\beta_k}\\[2mm]
&\leq e^{t_0 } V^{-\frac{2}{n}} 4^{2k}\left[\mathfrak{C}_5 t_0^2\left(t_0 + \frac{p}{2} +\alpha_0 -1\right) + C^2 \right]\int_{\Omega_k}f^{\beta_k}
\\[2mm]
&\leq 2\mathfrak{C}_9 t_0^3 e^{t_0 }V^{-\frac{2}{n}} 4^{2k}\int_{\Omega_k} f^{\beta_k},
\end{align*}
and for $n=2$
\begin{align*}
\mathfrak{C}_3 \left(\int_{\Omega_k} f^{\beta_{k+1}} \eta_k^4\right)^2
&\leq e^{t_0 } V^{-\frac{1}{2}}\left[\mathfrak{C}_5 t_0^2\left(t_0 + \frac{p}{2} +\alpha_0 -1\right) 2^k + C^2 4^{2k}\right] \int_{\Omega_k} f^{\beta_k}\\[2mm]
&\leq 2\mathfrak{C}_9 t_0^3e^{t_0}V^{-\frac{1}{2}} 4^{2k}\int_{\Omega_k} f^{\beta_k}.
\end{align*}
Taking $\beta^{-1}$-th power  of the both sides of above inequality and then substituting \eqref{text function2}, it holds
\begin{align*}
\|f\|_{L^{\beta_{k+1}} (\Omega_{k+1})}\leq
\begin{cases}
\left(2\mathfrak{C}_9 t_0^3e^{t_0} V^{-\frac{2}{n}}\right)^\frac{1}{\beta_k} 4^{\frac{2k}{\beta_k}}\|f\|_{L^{\beta_{k}} (\Omega_{k})}
&\quad {\rm for } \: n\geq 3;\\[2mm]\left(2\mathfrak{C}_9 t_0^3e^{t_0} V^{-\frac{1}{2}}\right)^\frac{1}{\beta_k}4^{\frac{2k}{\beta_k}}
\|f\|_{L^{\beta_{k}} (\Omega_{k})}
&\quad {\rm for }\: n=2.
\end{cases}
\end{align*}
Since
\begin{align*}
\sum_{k=1}^\infty \frac{1}{\beta_k} = \frac{\beta_1^{-1}}{1- \frac{n-2}{n}} =\frac{n}{2\beta_1}
\qquad
\mbox{and}
\qquad
\sum_{k=1}^\infty\frac{k}{\beta_k}=\frac{n^2}{4\beta_1},
\end{align*}
letting $k \rightarrow \infty$ and choosing
$$
\mathfrak{C}_{10}= \mathfrak{C}_{10}(n , p,\mathfrak{C}_9)\geq \left( 2\mathfrak{C}_9 t_0^3 e^{t_0} \right)^\frac{n}{2\beta_1} 4^{ \sum_{k=1}^\infty\frac{2    k}{\beta_k} },
$$
it derives to for all $n \geq 2$
\begin{align*}
\|f\|_{L^{\infty} \left(B_{\frac{R}{2}}(o)\right)}\leq \left( 2\mathfrak{C}_9 t_0^3 e^{t_0} \right)^\frac{n}{2\beta_1} 4^{ \sum_{k=1}^\infty\frac{2    k}{\beta_k} } V^{-\frac{1}{\beta }}\| f\|_{L^{\beta}\left(B_{\frac{3R}{4}}(o)\right)}\leq\mathfrak{C}_{10} V^{-\frac{1}{\beta }}\| f\|_{L^{\beta} \left(B_{\frac{3R}{4}}(o)\right)}.
\end{align*}
In fact, it is easy to observe that
\begin{align*}
\mathfrak{C}_9^{ \frac{n-2}{2(\alpha_0 + t_0 +\frac{p}{2} -1) }},\quad t_0^{{ \frac{3(n-2)}{2(\alpha_0 + t_0 +\frac{p}{2} -1)}}},\quad
e^{{ \frac{(n-2)t_0}{2(\alpha_0 + t_0 +\frac{p}{2} -1)}}},\quad 16^{{ \frac{n(n-2)}{2(\alpha_0 + t_0 +\frac{p}{2} -1)}}}
\end{align*}
are all uniformly bounded, which enable an upper bound depending on $n$ and $p$.

Utilizing the estimate \eqref{3.28} in \autoref{lemma3} and $\mathfrak{C}_{11}= \mathfrak{C}_{10} \mathfrak{C}_{8} c_1$,
we finally gain the $L^\infty$ upper-estimate to the gradient function $f= |\nabla w|^2$
\begin{align*}
\| f\|_{L^{\infty} (B_{\frac{R}{2}}(o))}\leq \mathfrak{C}_{11}  \left(  \frac{1+ \sqrt{\kappa}R}{R }\right)^2.
\end{align*}
We finish the proof of the lemma as well as Theorem \ref{theorem1}.
\end{proof}

\subsection{ Proofs of Liouville theorem and Harnack inequality}\label{sec2.5}\

We will use the gradient estimate in  \autoref{theorem1} to prove \autoref{theorem3} and \autoref{theorem4}.
\medskip

\noindent\textbf{Proof of \autoref{theorem3}: }
The choice $\kappa=0$ in \autoref{theorem1} concludes that in the case $a_1 <v <a_2$, there holds
$$\dfrac{|\nabla v(x_0)|}{v(x_0)-a_1}\leq \sup\limits_{B_{\frac{R}{2}}(x_0)}\dfrac{|\nabla v|}{v-a_1}\leq C(n,p) R^{-1}, \: \,\forall\, x_0 \in M;$$
in the case $a_2 <v <a_3$,
$$\dfrac{|\nabla v(x_0)|}{a_3-v(x_0)}\leq\sup\limits_{B_{\frac{R}{2}}(x_0)}\dfrac{|\nabla v|}{a_3-v}\leq C(n,p) R^{-1}, \ \,\forall\, x_0 \in M;$$
and in the case $a_1 <v <a_3$, we have
\begin{align*}
\dfrac{( a_3-a_1)^2|\nabla v(x_0)|^2}{4[(v(x_0) - a_1)(a_3 - v(x_0))]^2} \leq \sup\limits_{B_{\frac{R}{2}}(x_0)} \dfrac{(a_3-a_1)^2|\nabla v|^2}{4[(v - a_1)(a_3 - v)]^2} \leq C(n,p) R^{-1}, \ \,\forall\, x_0 \in M.
\end{align*}
In fact, recall
$$\nabla v = L(1 - \tanh^2 w)\nabla w, \quad  v - a_1 = L(1 + \tanh w),\quad \mbox{and}\quad v - a_3 = L(\tanh w - 1)$$
in Section \ref{section2.2}, we claim
\begin{align*}
    \nabla w = \dfrac{(a_3- a_1) \nabla v }{2(v-a_1) (a_3-v)}.
\end{align*}
And then letting $R \rightarrow \infty$,  it gives
\begin{align*}
    \nabla v(x_0)=0,\quad  \,\forall\, x_0 \in M,
\end{align*}
which concludes that $v\equiv \mathrm{const}$ and $\Delta_p v=0$.
Recalling the  cubic polynomial nonlinearity in \eqref{GAC}, we observe the only three constant solutions are $ v=a_1$, $v= a_2$ and $ v=a_3$. With this insight, we claim
\begin{itemize}
    \item If $p $ satisfies \eqref{CY1} and $ a_1 <v <a_2 $ or  $a_2 <v <a_3$, the equation \eqref{GAC} admits no solution.
    \item If $p $ satisfies \eqref{CY2} and $ a_1 <v <a_3$, the equation \eqref{GAC} admits the  unique solution $v =a_2.$
\end{itemize}
Here end the proof.
\qed

\medskip
\noindent\textbf{Proof of \autoref{theorem4}: }
According to \autoref{theorem1},
\begin{align*}
    |\nabla w(x)|
    \leq C(n,p) \frac{1+ \sqrt{\kappa }R}{R},\
    \:\forall\, x \in M.
\end{align*}
As a result, by letting $R \rightarrow \infty$,
\begin{align}
      |\nabla w(x)|
      \leq  C(n,p)  \sqrt{\kappa }, \quad \forall\, x \in M.
\end{align}
For any fixed $x_0 \in M$, $\forall\, x \in M$, we choose a minimizing geodesic $\varphi(t)$ connecting $x$ and $x_0$:
\begin{align*}
    \varphi :[0,\ell] \rightarrow M, \quad \varphi (0) = x_0, \quad \varphi (\ell) = x,
\end{align*}
where  $\ell = \mathrm{dist} (x,x_0)$ is the distance between $x $ and $x_0$. It is easy to see
\begin{align}
w(x) - w(x_0) = \int_0^\ell \frac{\mathrm{d}}{\mathrm{d}t} w  \circ \varphi (t)  \mathrm{d} t\leq \int_0^\ell
\left|\frac{\mathrm{d}}{\mathrm{d}t} w \circ \varphi(t)\right| \mathrm{d} t\leq|\nabla w|| \varphi'(t)| \ell
= C(n,p) \sqrt{\kappa}\ell,
\end{align}
thus,
\begin{align}
    w(x_0) -C(n,p) \sqrt{\kappa}\ell
    \leq w(x)
    \leq     w(x_0) +C(n,p) \sqrt{\kappa}\ell.
\end{align}
\begin{itemize}
    \item For the transformation $w = -(p-1) \ln (v-a_1)$ with $a_1 <v <a_2$ in Section \ref{section2.1}, we note that  $ \ell \leq \frac{R}{2}$ and  obtain
    \begin{align*}
        e^{-C(n,p) (1 + \sqrt{\kappa} R) \slash2 }
        \leq \dfrac{v(x)-a_1}{v(x_0)-a_1}
        \leq e^{C(n,p) (1 + \sqrt{\kappa} R) \slash2 }
    \end{align*}
    and thus
    \begin{align*}
        \dfrac{v(x)-a_1}{v(y)-a_1}
        \leq e^{C(n,p) (1 + \sqrt{\kappa} R)},\quad  \forall \: x,\, y \in B_{R\slash 2}(x_0).
    \end{align*}
Similarly, for the transformation $w = -(p-1) \ln (a_3-v)$ with $a_2 <v <a_3$, there holds
\begin{align*}
\dfrac{a_3-v(x)}{a_3-v(y)}
\leq e^{C(n,p) (1 + \sqrt{\kappa} R)},\quad  \forall \: x,\, y \in B_{R\slash 2}(x_0).
\end{align*}
\item For the transformation
$$ w = \operatorname{arctanh}\left( \dfrac{v - m}{L} \right)= \dfrac{1}{2} \ln \dfrac{1 + \frac{v-m}{L}}{1 - \frac{v-m}{L}} = \dfrac{1}{2} \ln \dfrac{v-a_1}{a_3-v}$$
with $a_1 <v <a_3$ in Section \ref{section2.2},
we note that  $ \ell \leq \frac{R}{2}$ and  obtain
\begin{align*}
-2C(n,p) (1 + \sqrt{\kappa} R)\leq \ln \dfrac{v(x)-a_1}{a_3-v(x)}- \ln \dfrac{v(x_0)-a_1}{a_3-v(x_0)}\leq 2C(n,p) (1 + \sqrt{\kappa} R),
\end{align*}
which deduces to
\begin{align*}
\left| \ln \frac{\big(v(x)-a_1\big)\big(a_3-v(y)\big)}{\big(v(y)-a_1\big)\big(a_3-v(x)\big)} \right| \leq 4C(n,p)(1+\sqrt{\kappa}R),
\end{align*}
i.e.,
\begin{align*}
\frac{\big(v(x)-a_1\big)\big(a_3-v(y)\big)}{\big(v(y)-a_1\big)\big(a_3-v(x)\big)} \leq e^{4C(n,p) (1 + \sqrt{\kappa} R)},\quad  \forall \: x, y \in B_{R\slash 2}(x_0).
\end{align*}
\end{itemize}
This completes the proof.
\qed



\bigskip
\bigskip
\noindent {\textbf{Data Availability}} \quad The manuscript has no associated data.

\medskip
\noindent{\textbf{Declarations}

\medskip
\noindent Conflicts of Interest\quad The authors declare that they have no Conflict of interest.

\bibliographystyle{acm}

\end{document}